\newtheorem{defn}{Definition}
\newtheorem{cor}[defn]{Corollary}
\newtheorem{lem}[defn]{Lemma}
\newtheorem{prop}[defn]{Proposition}
\theoremstyle{remark}
\newtheorem{rem}[defn]{Remark}
\theoremstyle{remark}
\newtheorem{exam}{Example}
\numberwithin{equation}{section} \numberwithin{defn}{section}
\newcommand\aut{\operatorname{Aut}}
\newcommand\ed{\operatorname{End}}
\newcommand\rk{\operatorname{rk}}
\newcommand\Ker{\operatorname{Ker}}
\newcommand\Det{\operatorname{det}}
\newcommand\tr{\operatorname{tr}}
\newcommand\limpl[1]{\underset{#1}\varprojlim\,}
\def\mod #1/#2{\kern.06em{\raise1.2pt\hbox{$#1$}}/
      {\raise-1.2pt\hbox{$#2$}}}
\newcommand\B{{\mathcal B}}
\renewcommand\tilde{\widetilde}
\renewcommand\lim{\limpl{A\in\B}}
\newcommand\beq{
      \setcounter{equation}{\value{defn}}\addtocounter{defn}1
      \begin{equation}}
\begin{document}

\title[G-Drazin inverses of finite potent endomorphisms and matrices]{On G-Drazin inverses of \\  Finite Potent Endomorphisms \\ and Arbitrary Square Matrices}
\author{Fernando Pablos Romo }

\address{Departamento de
Matem\'aticas and Instituto Universitario de F\'{\i}sica Fundamental y Matem\'aticas, Universidad de Salamanca, Plaza de la Merced 1-4,
37008 Salamanca, Espa\~na} \email{fpablos@usal.es}
\keywords{Finite Potent Endomorphism, Arbitrary Square Matrices,  G-Drazin Inverse, Generalized Inverses of Matrices.}
\thanks{2010 Mathematics Subject Classification: 15A09, 15A03, 15A04.
\\ This work is partially supported  by the
Spanish Government research projects nos. MTM2015-66760-P and PGC2018-099599-B-I00 and the Regional Government of Castile and Leon research project no. J416/463AC03.}

\maketitle

\begin{abstract} The aim of this work is to extend to finite potent endomorphisms the notion of G-Drazin inverse of a finite square matrix. Accordingly, we determine the structure and the properties of a G-Drazin inverse of a finite potent endomorphism and, as an application, we offer an algorithm to compute the explicit expression of all G-Drazin inverses of a finite square matrix.
\end{abstract}

%%%%%%%%%%%%%%%%%%%%%%%%%%%%%%%%%%%%%%%%%%%%%%%%%%%%%%%%%%%%%
\setcounter{tocdepth}1

\tableofcontents
%%%%%%%%%%%%%%%%%%%%%%%%%%%%%%%%%%%%%%%%%%%%%%%%%%%%%%%%%%%%%

\section{Introduction}
 
For an arbitrary $(n\times n)$-matrix $A$ with entries in the complex numbers, the index of $A$, $i(A) \geq 0$, is the smallest integer such that $\rk (A^{i(A)}) = \rk (A^{i(A)+1})$. Given $A \in {\text{Mat}}_{n\times n} ({\mathbb C})$ with $i(A) = r$, H. Wang and X. Liu introduced in \cite{WL} the notion of ``G-Drazin inverse'' of $A$ as a solution $X$ of the system \begin{equation} \label{eq:G-D} \begin{aligned} AXA &= A\, ; \\ XA^{r+1}&= A^r \, ;\\ A^{r+1} X &= A^r\, ,\end{aligned}\end{equation}  \noindent where $X$ is a $(n\times n)$-matrix with entries in $\mathbb C$.

Recently,  C. Coll, M.  Lattanzi and N.  Thome have proved in \cite{CLT} that a matrix $X  \in {\text{Mat}}_{n\times n} ({\mathbb C})$ is a solution of the system (\ref{eq:G-D}) if and only if $X$ satisfies that  \begin{equation} \label{eq:G-D-2} \begin{aligned} AXA &= A\, ; \\ XA^{r}&= A^rX\, .\end{aligned} \end{equation}

On the other hand, if $k$ is a field, $V$ is an arbitrary vector
space over $k$ and $\varphi$ is an endomorphism of $V$,
according to \cite{Ta} we say that $\varphi$ is ``finite-potent"
if $\varphi^n V$ is finite dimensional for some $n$.

During recent years, the author has extended the notions of Drazin inverse, Core-Moore-Penrose inverse and Drazin-Moore-Penrose inverses of finite square matrices to finite potent endomorphisms, and has offered several properties of these extensions (\cite{Pa-CN}, \cite{Pa-DMP} and \cite{Pa-Dr}). In particular, all the results obtained for finite potent endomorphisms are also valid for finite square matrices.

	The aim of this work is to extend to finite potent endomorphisms the notion of G-Drazin inverse of a finite square matrix. Indeed, we determine the structure of a G-Drazin inverse of a finite potent endomorphism and, in particular, 
we offer the explicit expression of all G-Drazin inverses of a finite square matrix.

The paper is organized as follows. In Section \ref{s:pre} we briefly recall the basic definitions of this work: the definition of finite potent endomorphisms with the decomposition of the vector space given by M. Argerami, F. Szechtman and R. Tifenbach in \cite{AST}; the Jordan bases of nilpotent endomorphisms of infinite-dimensional vector spaces; the Drazin inverse of an $(n\times n)$-matrix and a finite potent endomorphism; the core-nilpotent decomposition of a finite potent endomorphism; and the basic properties of the G-Drazin inverses of an square matrix.

Section \ref{s:GDrazin-Inv-FP} is devoted to proving the existence of G-Drazin inverses of finite potent endomorphism (Proposition \ref{p:char-G-D-fp}) and to offering the explicit structure of these linear maps on arbitrary $k$-vector spaces (Lemma \ref{l:char-gen-f-p-basi-r-345} and Corollary \ref{c:char-gen-f-p-basi-r-345}). Moreover, an explicit example of G-Drazin inveres of a finite potent endomorphism is given.

Finally, the goal of Section \ref{s:G-Dra-matri-3455} is to apply the results of Section \ref{s:GDrazin-Inv-FP} to study the set $A\{GD\}$ of G-Drazin inverses of a square matrix $A \in \text{Mat}_{n\times n} (k)$, where $k$ is an arbitrary field. Accordingly,  if the index of $A$ is r and $\rk (A^i)$ is the rank of $A^i$, we can  write $\nu_i  (A) = n - \rk (A^i)$ for all $i\in \{1, \dots, r\}$ and we check that there exists a bijection $$ k^{\nu_1 (A) \cdot \nu_r (A)} \times  k^{[\nu_r (A) - \nu_1 (A)]} \times k^{[\nu_1(A) - 1] [\nu_r (A) - \nu_1 (A)]} \overset {\sim} \longrightarrow A\{GD\}\, ,$$\noindent from where we determine the explicit expression of all G-Drazin inverses of $A$.

\section{Preliminaries} \label{s:pre}

This section is added for the sake of completeness.

\subsection{Finite Potent Endomorphisms}\label{ss:CT}

 Let $k$ be an arbitrary field, and let $V$ be a $k$-vector space.

    Let us now consider an endomorphism $\varphi$ of $V$. We say
that $\varphi$ is ``finite potent'' if $\varphi^n V$ is finite
dimensional for some $n$. This definition was introduced by J. Tate in \cite{Ta} as a basic tool for his elegant definition
of Abstract Residues.

 In 2007, M. Argerami, F. Szechtman and R. Tifenbach showed in \cite{AST} that an endomorphism $\varphi$ is
finite potent if and only if $V$ admits a $\varphi$-invariant
decomposition $V = U_\varphi \oplus W_\varphi$ such that
$\varphi_{\vert_{U_\varphi}}$ is nilpotent, $W_\varphi$ is finite
dimensional and $\varphi_{\vert_{W_\varphi}} \colon W_\varphi
\overset \sim \longrightarrow W_\varphi$ is an isomorphism.

Indeed, if $k[x]$ is the algebra of polynomials in the variable x with coefficients in $k$, we may view $V$ as an $k[x]$-module via $\varphi$, and the explicit definition of the above $\varphi$-invariant subspaces of $V$ is:
\begin{itemize}

\item $U_\varphi = \{v \in V \text{ such that } x^m v = 0 \text{ for some m }\}$;

\item $W_\varphi = \{v \in V \text{ such that } p(x) v = 0 \text{ for some } p(x) \in k[x] \text{ relative prime to } x\}$.

\end{itemize}

Note that if the annihilator polynomial of $\varphi$ is $x^m\cdot p(x)$ with $(x,p(x)) = 1$, then $U_\varphi = \Ker \varphi^m$ and $W_\varphi = \Ker p(\varphi)$.

Hence, this decomposition is unique. In this paper we shall call this decomposition the
$\varphi$-invariant AST-decomposition of $V$.

For a finite potent endomorphism $\varphi$, a trace $\tr_V(\varphi) \in k$ may
be defined as $\tr_V(\varphi) = \tr_{W_\varphi}(\varphi_{\vert_{W_\varphi}})$

This trace has the following properties:
\begin{enumerate}
\item if $V$ is finite dimensional, then $\tr_V(\varphi)$ is the
ordinary trace;
 \item if $W$ is a subspace of $V$ such that
$\varphi W \subset W$ then $$\tr_V(\varphi) = \tr_W(\varphi) +
\tr_{V/W}(\varphi)\, ;$$ \item if $\varphi$ is nilpotent, then
$\tr_V(\varphi) = 0$.
\end{enumerate}

Usually, $\tr_V$ is named ``Tate's trace''.

It is known that in general $\tr_V$ is not linear; that is, it is possible to find finite potent endomorphisms $\theta_1, \theta_2 \in \ed_k (V)$ such that $$\tr_V (\theta_1 + \theta_2) \ne \tr_V (\theta_1) + \tr_V (\theta_2)\, .$$

Moreover, with the previous notation, and using the AST-decomposition of $V$, D. Hern\'andez Serrano and the author of this paper have offered in \cite{HP} a definition
of a determinant for finite potent endomorphisms as follows:
$$\Det^k_V(1 +\varphi) := \Det^k_{W_\varphi}(1 + \varphi_{\vert_{W_\varphi}})\,.$$

This determinant satisfies the following properties:
\begin{itemize}
\item if $V$ is finite dimensional, then $\Det^k_V(1 + \varphi)$
is the ordinary determinant;

\item if $W$ is a subspace of $V$ such that $\varphi W \subset W$,
then $$\Det^k_V(1 + \varphi) = \Det^k_W(1 + \varphi) \cdot
\Det^k_{V/W}(1 + \varphi)\, ;$$

\item if $\varphi$ is nilpotent, then $\Det^k_V(1 + \varphi) = 1$.
\end{itemize}

For details readers are referred to  \cite{HP}, \cite{Pa1}, \cite{RPa} and \cite{Ta}.

\subsection{Jordan bases of nilpotent endomorphisms of infinite-dimensional vector spaces}\label{ss:nilpotent-basis}

Let $V$ be a vector space over an arbitrary field $k$ and let $f\in\ed_k (V)$ be a nilpotent endomorphism. 

If n is the nilpotency index of $f$, according to the statements \cite{Pa},  setting $W_i^f = \Ker f^i/[\Ker f^{i-1} + f(\Ker f^{i+1})]$ with $i\in \{1,2,\dots, n\}$, $\alpha_i (V,f) = \text{dim}_k W_i^f$ and $S_{\alpha_i (V,f)}$ a set such that $\# S_{\alpha_i (V,f)} = \alpha_i (V,f)$ with $S_{\alpha_i (V,f)} \cap S_{\alpha_j (V,f)} = \emptyset$ for all $i \ne j$, one has that there exists a family of vectors $\{{ {v_{s_i}}}\}$  that determines a Jordan basis of $f$: \begin{equation} \label{eq:jordan-basis-B} B = \underset {\begin{aligned} s_i &\in S_{{\alpha}_i (V,f)} \\ 1 &\leq i \leq n \end{aligned}} {\bigcup} \{{ {v_{s_i}}}, f ({{ v_{s_i}}}), \dots , f^{i-1} ({ {v_{s_i}}})\}\, .\end{equation}
Moreover, if we write $H_{s_i}^f = \langle { {v_{s_i}}}, f ({{ v_{s_i}}}), \dots , f^{i-1} ({ {v_{s_i}}}) \rangle$, the basis $B$ induces a decomposition \begin{equation} \label{eq:decomp} V =  \underset {\begin{aligned} s_i &\in S_{{\alpha}_i (V,f)} \\ 1 &\leq i \leq n \end{aligned}} \bigoplus H_{s_i}^f\, .\end{equation}

For a different method to construct Jordan bases of nilpotent endomorphisms of infinite-dimensional vector spaces readers can see \cite{LB}.

\subsection{Drazin inverse of Finite Potent Endomorphisms}\label{ss:drazin-inverse}

\subsubsection{Drazin Inverse of $(n\times n)$-matrices} \label{sss:Drazin-inv-finite}

\smallskip

Let $A\in {\text{Mat}}_{n\times n} ({\mathbb C})$.

\begin{defn} \label{def:index-fin} The ``index of $A$'', $i(A) \geq 0$, is the smallest integer such that $\rk (A^{i(A)}) = \rk (A^{i(A)+1})$.
\end{defn}

\smallskip

In 1958, given a matrix $A\in {\text{Mat}}_{n\times n} ({\mathbb C})$ with $i(A) = k$, M. P. Drazin -\cite{Dr}- showed the existence of a unique $(n\times n)$-matrix $A^D$ satisfying the equations:
\begin{itemize}
\item $A^{k+1} A^D = A^k$ for $k = i(A)$;
\item $A^D A A^D = A^D$;
\item $A^D A = A A^D$.
\end{itemize}

The Drazin inverse $A^D$ also verifies that

\begin{itemize}
\item $(A^D)^D = A$ if and only if $i(A) \leq 1$;
\item if $A^2 = A$, then $A^D = A$.
\end{itemize}

\smallskip

\subsubsection{Drazin inverse of Finite Potent Endomorphisms}\label{ss:drazin-inverse}

Let $V$  be an arbitrary $k$-vector space and let $\varphi \in \ed_k (V)$ be a finite potent endomorphism of $V$. Let us consider the AST-decomposition $V = U_\varphi \oplus W_\varphi$ induced by $\varphi$ (Subsection \ref{ss:CT}).

\smallskip

\begin{defn} \label{def:index-inf} We shall call ``index of $\varphi$'', $i(\varphi)$, to the nilpotent order of $\varphi_{\vert_{U_\varphi}}$.
\end{defn}

\smallskip

In \cite{Pa-Dr} (Lemma 3.2) is proved that for finite-dimensional vector spaces this definition of index coincides with Definition \ref{def:index-fin}. Note that $i(\varphi) = 0$ if and only if $V$ is a finite-dimensional vector space and $\varphi$ is an automorphism.

\smallskip

 For each finite potent endomorphism $\varphi$ there exists a unique finite potent endomorphism $\varphi^D$ that satisfies that:

\begin{enumerate}
\item $\varphi^{k+1} \circ \varphi^D = \varphi^{k}$;

\item $\varphi^D \circ \varphi \circ \varphi^D = \varphi^D$;

\item $\varphi^D \circ \varphi = \varphi \circ \varphi^D$,
\end{enumerate}

where $k$ is the index of $\varphi$.

The map $\varphi^D$ is the Drazin inverse of $\varphi$ and is the unique linear map such that:

 $$\varphi^D (v) = \left \{\begin{aligned} (\varphi_{\vert_{W_\varphi}})^{-1} \qquad &\text{ if } \qquad v \in W_\varphi \\ \qquad 0 \qquad \qquad &\text{ if } \qquad v \in U_\varphi \end{aligned} \right . \, .$$ 

Moreover, $\varphi^D$ satisfies the following properties:

\begin{itemize}

\item $(\varphi^D)^D = \varphi$ if and only if the $i(\varphi) \leq1$;

\item $\varphi = \varphi^D$ if and only if  $\varphi_{\vert_{U_\varphi}} = 0$ and $(\varphi_{\vert_{W_\varphi}})^2 = \text{Id}_{\vert_{W_\varphi}}$;

\item $\tr_V (\varphi + \varphi^D) = \tr_V (\varphi) + \tr_V(\varphi^D)$;

\item  if $\psi$ is a projection finite potent endomorphism, then $\psi^D  = \psi$.

\end{itemize}

\medskip

\subsection{CN Decomposition of a Finite Potent Endomorphism}

Given a finite potent endomorphism $\varphi \in \ed_k (V)$,  there exists a unique decomposition $\varphi = \varphi_1 + \varphi_2$, where $\varphi_1, \varphi_2 \in \ed_k (V)$ are finite potent endomorphisms satisfying that:

\begin{itemize}

\item $i(\varphi_1) \leq 1$;

\item $\varphi_2$ is nilpotent;

\item $\varphi_1 \circ \varphi_2 = \varphi_2 \circ \varphi_1 = 0$.

\end{itemize}

According to \cite{Pa-CN} -Theorem 3.2-, one has that $\varphi_1 = \varphi \circ \varphi^D \circ \varphi$, which is the core part of $\varphi$. Also, $\varphi_2$ is named the nilpotent part of $\varphi$.

Moreover, one has that \begin{equation} \label{eq:index1} \varphi = \varphi_1 \Longleftrightarrow U_\varphi = \Ker \varphi \Longleftrightarrow  W_\varphi = \text{ Im } \varphi \Longleftrightarrow (\varphi^D)^D = \varphi \Longleftrightarrow i(\varphi) \leq 1\, .\end{equation}

\medskip

\subsection{G-Drazin inverses of a square matrix}

Given $A \in {\text{Mat}}_{n\times n} ({\mathbb C})$ with $i(A) = r$, H. Wang and X. Liu introduced in \cite{WL} the notion of ``G-Drazin inverse'' of $A$ as a solution $X$ of the system \begin{equation} \label{eq:G-D-2} \begin{aligned} AXA &= A\, ; \\ XA^{r+1}&= A^r \, ;\\ A^{r+1} X &= A^r\, ,\end{aligned}\end{equation} \noindent where $X$ is a $(n\times n)$-matrix with entries in $\mathbb C$. 

Recently,  C. Coll, M.  Lattanzi and N.  Thome have proved in \cite{CLT} that a matrix $X  \in {\text{Mat}}_{n\times n} ({\mathbb C})$ is a solution of the system (\ref{eq:G-D-2}) if and only if $X$ satisfies that  \begin{equation} \label{eq:G-D-2} \begin{aligned} AXA &= A\, ; \\ XA^{r}&= A^rX\, .\end{aligned} \end{equation}

Usually, the set of G-Drazin inverses of a matrix $A$ is denoted by $A{GD}$ and a G-matrix inverse of $A$ is denoted by $A^{GD}$.

If $J$ is the Jordan matrix associated with $A \in {\text{Mat}}_{n\times n} ({\mathbb C})$, such that $A = B\cdot J \cdot B^{-1}$, with $B$ being a non-singular matrix and $$J = \begin{pmatrix} J_1 & 0 \\ 0 & J_0\end{pmatrix}\, ,$$\noindent $J_0$ and $J_1$ being the parts of $J$ corresponding to zero and non-zero eigenvalues respectively, it is known that a $G$-Drazin inverse is \begin{equation} \label{eq:cls} A^{GD} = B \cdot \begin{pmatrix} J_1^{-1} & 0 \\ 0 & J_0^- \end{pmatrix} \cdot B^{-1}\, ,\end{equation}

where is $J_0^-$ is a generalized inverse of $J_0$ (1-inverse).

\medskip

\section{G-Drazin inverses of finite potent endomorphisms} \label{s:GDrazin-Inv-FP}

The aim of this section is to generalize the definition and the main properties of the G-Drazin inverses of a matrix $A$ to finite potent endomorphisms.

Let $k$ be a field and let $V$ be an arbitrary $k$-vector space.

\begin{defn} \label{d:G-Drazin-FP} Given a finite potent endomorphism $\varphi \in \ed_k (V)$, we say that an endomorphism $\varphi^{GD} \in \ed_k (V)$ is a G-Drazin inverse of $\varphi$ when it satisfies that 

\begin{equation} \label{eq:G-D-fp} \begin{aligned} \varphi \circ \varphi^{GD} \circ \varphi &= \varphi \, ; \\ \varphi^{GD} \circ \varphi^{r}&= \varphi^r \circ \varphi^{GD}\, ,\end{aligned} \end{equation} \noindent where $i(\varphi) = r$.

\end{defn}

\smallskip

\begin{lem} \label{l:G-D-inv-345} Let $\varphi \in \ed_k (V)$ be a finite potent endomorphism with $i(\varphi) = r$ and let $V = W_\varphi \oplus U_\varphi$ be the AST-decomposition determined by $\varphi$. If $f\in \ed_k(V)$ is an endomorphism such that $f \circ \varphi^{r}= \varphi^r \circ f$, then $W_\varphi$ and $U_\varphi$ are invariant under the action of $f$.
\end{lem}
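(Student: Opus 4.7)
The plan is to reduce the statement to the fact that the AST summands can be intrinsically described as the kernel and image of $\varphi^r$, after which the commutation relation with $f$ does the rest.

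First I would identify $U_\varphi = \Ker \varphi^r$ and $W_\varphi = \im \varphi^r$. Since $i(\varphi) = r$ is by definition the nilpotency order of $\varphi|_{U_\varphi}$, clearly $U_\varphi \subseteq \Ker \varphi^r$. For the reverse inclusion, take $v \in \Ker \varphi^r$ and decompose $v = u + w$ with $u \in U_\varphi$, $w \in W_\varphi$; then $0 = \varphi^r v = \varphi^r u + \varphi^r w$, and since the two summands live in the $\varphi$-invariant subspaces $U_\varphi$ and $W_\varphi$ respectively, both vanish. Using that $\varphi|_{W_\varphi}$ is an isomorphism, $\varphi^r w = 0$ forces $w = 0$, so $v = u \in U_\varphi$. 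The identification $W_\varphi = \im \varphi^r$ follows similarly: $\im \varphi^r = \varphi^r(U_\varphi) + \varphi^r(W_\varphi) = 0 + W_\varphi$, where we use that $\varphi^r|_{W_\varphi}$ is an isomorphism.

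Now the invariance is immediate. For $u \in U_\varphi$, the hypothesis gives
\[
\varphi^r(f(u)) = f(\varphi^r u) = f(0) = 0,
\]
so $f(u) \in \Ker \varphi^r = U_\varphi$. For $w \in W_\varphi$, write $w = \varphi^r(v)$ for some $v \in V$; then
\[
f(w) = f(\varphi^r v) = \varphi^r(f(v)) \in \im \varphi^r = W_\varphi.
\]

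There is essentially no obstacle here: the only non-cosmetic point is the characterization $U_\varphi = \Ker \varphi^r$ and $W_\varphi = \im \varphi^r$, which hinges on the precise value of $i(\varphi)$ being the nilpotency index of $\varphi|_{U_\varphi}$ (not merely an upper bound) together with the fact that $\varphi|_{W_\varphi}$ is invertible. Once this is in place, the $f$-invariance of both summands falls out of the commutation $f \circ \varphi^r = \varphi^r \circ f$ in one line each.
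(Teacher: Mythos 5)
Your proof is correct and follows essentially the same route as the paper's: both arguments rest on identifying $W_\varphi$ as the image and $U_\varphi$ as the kernel of $\varphi^r$ (the paper writes this via the core part as $\varphi_1^r=\varphi^r$) and then pushing the commutation relation through in one line for each summand. The only cosmetic difference is that you verify the kernel/image identification directly from the AST-decomposition, while the paper cites the properties of the CN-decomposition; also note that your closing remark is slightly off, since $U_\varphi=\Ker\varphi^r$ only needs $r$ to be at least the nilpotency index of $\varphi_{\vert_{U_\varphi}}$, not exactly equal to it.
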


\begin{proof} Let $\varphi = \varphi_1 + \varphi_2$ be the CN-decomposition of $\varphi$. 

Since $i(\varphi) = r$, bearing in mind that $\varphi^r = \varphi_1^r$, $(\varphi_1^r)_{\vert_{W_\varphi}} \in \aut_k (W_\varphi)$ and \linebreak Im $\varphi_1^r = W_{\varphi}$, if $w\in W_{\varphi}$ and $\varphi_1^r (w') = w$,
 then $$f (w) = (f \circ \varphi_1^r) (w') = (\varphi_1^r \circ f) (w') \in W_\varphi\, .$$

Accordingly, $W_\varphi$ is $f$-invariant.

Moreover, if $u\in U_\varphi$, then $$0 = (f \circ \varphi_1^r) (u) = (\varphi_1^r \circ f) (u)$$\noindent and we deduce that $f (u) \in \Ker  \, \varphi_1^r = U_\varphi$. Hence, $U_\varphi$ is also $f$-invariant.
\end{proof}

\begin{cor} \label{c:G-D-inv-3453} If $\varphi^{GD} \in \ed_k (V)$ is a G-Drazin inverse of a finite potent endomorphism $\varphi \in \ed_k (V)$, with $i(\varphi) = r$ and AST-decomposition $V = W_\varphi \oplus U_\varphi$, then $W_\varphi$ and $U_\varphi$ are invariant under the action of  $\varphi^{GD}$.
\end{cor}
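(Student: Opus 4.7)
The statement is almost immediate from the preceding lemma, so the plan is essentially a one-line invocation. Specifically, by Definition \ref{d:G-Drazin-FP}, any G-Drazin inverse $\varphi^{GD}$ of $\varphi$ satisfies in particular the commutation relation
\[
\varphi^{GD} \circ \varphi^{r} = \varphi^{r} \circ \varphi^{GD},
\]
where $r = i(\varphi)$. This is exactly the hypothesis imposed on $f$ in Lemma \ref{l:G-D-inv-345}.

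Therefore I would simply set $f = \varphi^{GD}$ and apply Lemma \ref{l:G-D-inv-345} to conclude that both $W_\varphi$ and $U_\varphi$ are invariant under $\varphi^{GD}$. The first G-Drazin equation $\varphi \circ \varphi^{GD} \circ \varphi = \varphi$ plays no role in the argument and need not be used.

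Since there is no real obstacle, there is nothing to optimize here: the content of the corollary is to record, for later use in the structural analysis of G-Drazin inverses (the upcoming Lemma \ref{l:char-gen-f-p-basi-r-345} and Corollary \ref{c:char-gen-f-p-basi-r-345}), that $\varphi^{GD}$ respects the AST-decomposition of $V$, so that one can analyze its restrictions to $W_\varphi$ and $U_\varphi$ separately.
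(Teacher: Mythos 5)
Your proposal is correct and matches the paper exactly: the corollary is stated immediately after Lemma \ref{l:G-D-inv-345} with no separate proof, precisely because one just takes $f = \varphi^{GD}$ and notes that the second G-Drazin equation supplies the hypothesis $f \circ \varphi^{r} = \varphi^{r} \circ f$. Your observation that the first equation $\varphi \circ \varphi^{GD} \circ \varphi = \varphi$ is not needed here is also accurate.
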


\smallskip

The structure of a G-Drazin inverse of a finite potent endomorphism is given by the following proposition:

\begin{prop} \label{p:char-G-D-fp} Given a finite potent endomorphism $\varphi \in \ed_k (V)$  with $i(\varphi) = r$ and AST-decomposition $V = W_\varphi \oplus U_\varphi$, one has that $\varphi^{GD} \in \ed_k (V)$ is a G-Drazin inverse of $\varphi$ if and only if  $W_\varphi$ and $U_\varphi$ are invariant under the action of  $\varphi^{GD}$, $(\varphi^{GD})_{\vert_{W_\varphi}} = (\varphi_{\vert_{W_\varphi}})^{-1}$ and $(\varphi^{GD})_{\vert_{U_\varphi}} = (\varphi_{\vert_{U_\varphi}})^{-}$, where $ (\varphi_{\vert_{U_\varphi}})^{-}$ is a generalized inverse of $\varphi_{\vert_{U_\varphi}}$.
\end{prop}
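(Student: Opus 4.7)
\bigskip

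\noindent\textbf{Proof strategy.} The plan is to exploit the decomposition $V = W_\varphi \oplus U_\varphi$ and work separately on each summand. Once $\varphi^{GD}$ is known to preserve the two summands, all equations in (\ref{eq:G-D-fp}) split into independent conditions on $W_\varphi$ and $U_\varphi$, and each piece becomes essentially trivial.

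\medskip

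\noindent\textbf{Forward direction.} Assume $\varphi^{GD}$ satisfies (\ref{eq:G-D-fp}). The invariance of $W_\varphi$ and $U_\varphi$ under $\varphi^{GD}$ is exactly the content of Corollary \ref{c:G-D-inv-3453}. Hence $\varphi^{GD}$ splits as $(\varphi^{GD})_{\vert_{W_\varphi}} \oplus (\varphi^{GD})_{\vert_{U_\varphi}}$, and the equation $\varphi \circ \varphi^{GD} \circ \varphi = \varphi$ decomposes into two independent equations, one on each summand. On $W_\varphi$, since $\varphi_{\vert_{W_\varphi}}$ is an automorphism, composing with $(\varphi_{\vert_{W_\varphi}})^{-1}$ on both sides forces $(\varphi^{GD})_{\vert_{W_\varphi}} = (\varphi_{\vert_{W_\varphi}})^{-1}$. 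On $U_\varphi$, the restriction of the equation is exactly the defining equation of a $1$-inverse, so $(\varphi^{GD})_{\vert_{U_\varphi}}$ is a generalized inverse of $\varphi_{\vert_{U_\varphi}}$.

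\medskip

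\noindent\textbf{Backward direction.} Assume the three structural conditions. Since $W_\varphi$ and $U_\varphi$ are both $\varphi$-invariant and $\varphi^{GD}$-invariant, it suffices to verify the two equations of (\ref{eq:G-D-fp}) on each summand. The equation $\varphi \circ \varphi^{GD} \circ \varphi = \varphi$ holds on $W_\varphi$ because $(\varphi^{GD})_{\vert_{W_\varphi}}$ is the actual inverse of $\varphi_{\vert_{W_\varphi}}$, and it holds on $U_\varphi$ by the definition of a generalized inverse. For the commutation relation $\varphi^{GD} \circ \varphi^r = \varphi^r \circ \varphi^{GD}$, observe that on $W_\varphi$ both sides equal $(\varphi_{\vert_{W_\varphi}})^{r-1}$, while on $U_\varphi$ we have $(\varphi_{\vert_{U_\varphi}})^r = 0$ because $r = i(\varphi)$ is the nilpotency index of $\varphi_{\vert_{U_\varphi}}$, so both sides vanish.

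\medskip

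\noindent\textbf{Where the work is.} There is no serious obstacle: the proposition is essentially a bookkeeping statement once Corollary \ref{c:G-D-inv-3453} has been established. The only point requiring slight care is the backward direction's commutation equation, where one has to notice that the exponent $r$ is precisely the nilpotency index of $\varphi_{\vert_{U_\varphi}}$ so that no compatibility condition between $(\varphi^{GD})_{\vert_{U_\varphi}}$ and powers of $\varphi_{\vert_{U_\varphi}}$ is needed beyond the $1$-inverse relation; this is exactly what justifies the flexibility in the choice of the generalized inverse on $U_\varphi$.
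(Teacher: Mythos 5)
Your proof is correct and follows essentially the same route as the paper: both directions rely on Corollary \ref{c:G-D-inv-3453} for the invariance of $W_\varphi$ and $U_\varphi$, split the equations of (\ref{eq:G-D-fp}) along the AST-decomposition, and verify the commutation relation $\varphi^{GD}\circ\varphi^r=\varphi^r\circ\varphi^{GD}$ by noting that $(\varphi^r)_{\vert_{U_\varphi}}=0$ while both sides equal $(\varphi_{\vert_{W_\varphi}})^{r-1}$ on $W_\varphi$. The only difference is that you spell out the cancellation on $W_\varphi$ that the paper leaves as ``it is clear''.
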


\begin{proof} Let $\varphi^{GD} \in \ed_k (V)$ be a G-Drazin inverse of $\varphi$. It follows from Corollary \ref{c:G-D-inv-3453} that  $W_\varphi$ and $U_\varphi$ are invariant under the action of  $\varphi^{GD}$. Also, since $\varphi \circ \varphi^{GD} \circ \varphi = \varphi$, it is clear that $(\varphi^{GD})_{\vert_{W_\varphi}} = (\varphi_{\vert_{W_\varphi}})^{-1}$ and $(\varphi^{GD})_{\vert_{U_\varphi}} = (\varphi_{\vert_{U_\varphi}})^{-}$, where $ (\varphi_{\vert_{U_\varphi}})^{-}$ is a generalized inverse of $\varphi_{\vert_{U_\varphi}}$.

Conversely, if  we consider an endomorphism $\psi \in \ed_k (V)$ satisfying that $W_\varphi$ and $U_\varphi$ are invariant under the action of $\psi$,  $\psi_{\vert_{W_\varphi}} = (\varphi_{\vert_{W_\varphi}})^{-1}$ and $\psi_{\vert_{U_\varphi}} = (\varphi_{\vert_{U_\varphi}})^{-}$, with $ (\varphi_{\vert_{U_\varphi}})^{-}$ a generalized inverse of $\varphi_{\vert_{U_\varphi}}$, it is easy to check that $\varphi \circ \psi \circ \varphi = \varphi$.

Moreover, since $(\varphi^{r})_{\vert_{U_\varphi}} = 0$, then it follows from the properties of $\psi$ that $$(\psi \circ \varphi^{r})_{\vert_{W_\varphi}} =  (\varphi^{r-1})_{\vert_{W_\varphi}} = (\varphi^r \circ \psi)_{\vert_{W_\varphi}}$$\noindent from where we deduce that $$\psi \circ \varphi^{r} =  \varphi^r \circ \psi$$\noindent and the statement is proved.
\end{proof} 

Direct consequences of Proposition \ref{p:char-G-D-fp} are:

\begin{cor} \label{c:char-G-D-fp} Given a finite potent endomorphism $\varphi \in \ed_k (V)$ with CN-decomposition $\varphi = \varphi_1 + \varphi_2$, if $\varphi^{GD} \in \ed_k (V)$ is a G-Drazin inverse of $\varphi$, one has that $\varphi^{GD} = \varphi^D + \varphi_2^{GD}$, where $\varphi^D$ is the Drazin inverse of $\varphi$, and $\varphi_2^{GD} \in \ed_k (V)$ is the unique linear map satisfying that $$\varphi_2^{GD} (v) = \left \{ \begin {aligned}  0\quad  \quad &\text{ if } v \in W_\varphi \\ (\varphi^{GD})_{\vert_{U_\varphi}} &\text{ if } v \in U_\varphi\end{aligned} \right .\, ,$$\noindent which is a G-Drazin inverse of $\varphi_2$.
\end{cor}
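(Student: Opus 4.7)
The proof proposal is to read off both assertions from the structural characterization supplied by Proposition \ref{p:char-G-D-fp}. That proposition tells us that $W_\varphi$ and $U_\varphi$ are $\varphi^{GD}$-invariant, and that $(\varphi^{GD})_{\vert_{W_\varphi}} = (\varphi_{\vert_{W_\varphi}})^{-1}$ while $(\varphi^{GD})_{\vert_{U_\varphi}}$ is a generalized inverse of $\varphi_{\vert_{U_\varphi}}$. Since the Drazin inverse $\varphi^D$ recalled in Subsection \ref{ss:drazin-inverse} is the unique endomorphism acting as $(\varphi_{\vert_{W_\varphi}})^{-1}$ on $W_\varphi$ and as $0$ on $U_\varphi$, the difference $\varphi_2^{GD} := \varphi^{GD} - \varphi^D$ vanishes on $W_\varphi$ and agrees with $(\varphi^{GD})_{\vert_{U_\varphi}}$ on $U_\varphi$. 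This delivers both the displayed formula in the statement and the identity $\varphi^{GD} = \varphi^D + \varphi_2^{GD}$.

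The remaining task is to check that $\varphi_2^{GD}$ is a G-Drazin inverse of $\varphi_2$. My first step is to describe $\varphi_2$ on the AST-components of $\varphi$. Using $\varphi_1 = \varphi \circ \varphi^D \circ \varphi$, one computes $\varphi_1 = \varphi$ on $W_\varphi$ and $\varphi_1 = 0$ on $U_\varphi$; hence $\varphi_2 = \varphi - \varphi_1$ vanishes on $W_\varphi$ and coincides with $\varphi_{\vert_{U_\varphi}}$ on $U_\varphi$. Consequently $\varphi_2$ is nilpotent on all of $V$ with nilpotency order $i(\varphi) = r$, so the AST-decomposition attached to $\varphi_2$ is $V = U_{\varphi_2}$ with $W_{\varphi_2} = 0$, and in particular $\varphi_2^{r} = 0$.

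With this in hand, the two conditions of Definition \ref{d:G-Drazin-FP} can be verified block by block on $V = W_\varphi \oplus U_\varphi$. The commutation relation $\varphi_2^{GD} \circ \varphi_2^{r} = \varphi_2^{r} \circ \varphi_2^{GD}$ is automatic because $\varphi_2^{r} = 0$. The identity $\varphi_2 \circ \varphi_2^{GD} \circ \varphi_2 = \varphi_2$ holds trivially on $W_\varphi$, where $\varphi_2$ vanishes, while on $U_\varphi$ it collapses to the classical generalized-inverse relation $\varphi_{\vert_{U_\varphi}} \circ (\varphi_{\vert_{U_\varphi}})^{-} \circ \varphi_{\vert_{U_\varphi}} = \varphi_{\vert_{U_\varphi}}$ already granted by Proposition \ref{p:char-G-D-fp}.

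I do not anticipate a serious obstacle: once the action of $\varphi$, $\varphi_1$, $\varphi_2$, $\varphi^D$ and $\varphi^{GD}$ on the two AST-summands has been unpacked, the corollary reduces to piecewise bookkeeping on $W_\varphi$ and $U_\varphi$. The only mildly delicate point is to keep track of which AST-decomposition is relevant for which endomorphism, and to observe that the AST-decomposition of $V$ with respect to $\varphi_2$ coincides with $V = 0 \oplus V$, so that the G-Drazin axioms for $\varphi_2$ are weaker than and strictly implied by the axioms already satisfied by $\varphi^{GD}$ with respect to $\varphi$.
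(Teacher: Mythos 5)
Your argument is correct and is exactly the reasoning the paper intends: the corollary is stated there as a direct consequence of Proposition \ref{p:char-G-D-fp}, and your proof simply makes explicit the block-by-block bookkeeping on $W_\varphi \oplus U_\varphi$ (identifying $\varphi^D$ and $\varphi_1$ on each summand, then checking the two G-Drazin axioms for $\varphi_2$). The only nitpick is that the nilpotency order of $\varphi_2$ is $\max(r,1)$ rather than $r$ in the degenerate case $r=0$, but since $\varphi_2^{i(\varphi_2)}=0$ in every case, the commutation condition remains trivial and nothing in your argument is affected.
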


\begin{cor} \label{c:Dra-G-d-1-38} If $\varphi = \varphi_1 + \varphi_2$ is the CN-decomposition of a finite potent endomorphism $\varphi \in \ed_k (V)$, then the Drazin inverse $\varphi^D$ is a G-Drazin inverse of $\varphi_1$.
\end{cor}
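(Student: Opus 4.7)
The plan is to verify the hypotheses of Proposition \ref{p:char-G-D-fp} for the endomorphism $\varphi_1$ with candidate $\varphi^D$. The first step is to identify the AST-decomposition associated with $\varphi_1$. Since $\varphi_1 = \varphi \circ \varphi^D \circ \varphi$, the description of $\varphi^D$ recalled in Subsection \ref{ss:drazin-inverse} shows that $\varphi_1$ agrees with $\varphi$ on $W_\varphi$ and vanishes on $U_\varphi$. Hence $\varphi_1|_{W_\varphi} \in \aut_k(W_\varphi)$ and $\varphi_1|_{U_\varphi} = 0$, so by uniqueness of the AST-decomposition one has $W_{\varphi_1} = W_\varphi$ and $U_{\varphi_1} = U_\varphi$. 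Observe in passing that $i(\varphi_1) \leq 1$, consistent with (\ref{eq:index1}).

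Next I would check that both summands of this decomposition are $\varphi^D$-invariant. Directly from the explicit formula for the Drazin inverse, $\varphi^D(W_\varphi) \subseteq W_\varphi$ (since $(\varphi^D)|_{W_\varphi} = (\varphi|_{W_\varphi})^{-1}$) and $\varphi^D(U_\varphi) = 0 \subseteq U_\varphi$, so the invariance condition is satisfied.

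Finally, I would verify the two restriction conditions of Proposition \ref{p:char-G-D-fp}. On $W_{\varphi_1} = W_\varphi$ one has
$$(\varphi^D)|_{W_{\varphi_1}} = (\varphi|_{W_\varphi})^{-1} = (\varphi_1|_{W_{\varphi_1}})^{-1},$$
because $\varphi_1$ coincides with $\varphi$ on $W_\varphi$. On $U_{\varphi_1} = U_\varphi$ one has $\varphi_1|_{U_{\varphi_1}} = 0$ and $(\varphi^D)|_{U_{\varphi_1}} = 0$, and zero is trivially a generalized inverse of zero. All hypotheses of Proposition \ref{p:char-G-D-fp} are therefore met for the pair $(\varphi_1, \varphi^D)$, which proves the claim.

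There is no real obstacle here: the statement is essentially a structural observation, and the only subtlety worth flagging is that one must explicitly identify $W_{\varphi_1}$ and $U_{\varphi_1}$ with $W_\varphi$ and $U_\varphi$ before applying Proposition \ref{p:char-G-D-fp}; once this is done the three conditions of that proposition reduce to facts already recorded about the Drazin inverse of $\varphi$.
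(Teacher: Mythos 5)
Your argument is correct and follows exactly the route the paper intends: the corollary is stated there as a direct consequence of Proposition \ref{p:char-G-D-fp}, and your verification --- identifying $W_{\varphi_1}=W_\varphi$, $U_{\varphi_1}=U_\varphi$ via uniqueness of the AST-decomposition and then checking the invariance and restriction conditions for $\varphi^D$ --- is precisely the computation the paper leaves implicit. Nothing is missing.
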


\medskip

We shall now characterize all of the G-Drazin inverses of a finite potent endomorphism.

\begin{lem} \label{l:char-gen-inv-hom-n2} Let $E$ be a $k$-vector space of dimension n and let $f\in \ed_k (E)$ be an endomorphism with annihilating polynomial $a_f (x) )= x^n$. If $e\in E$ is a vector such that $f^{n-1} (e) \ne 0$, then every generalized inverse $f^- \in \ed_k (E)$ is determined by the expressions $$f^- (f^i (e)) = \left \{ \begin{aligned} f^{i-1} (e) + \lambda_i f^{n-1} (e) \quad &\text{ if } \quad i\geq 1 \\ \quad {\tilde e} \quad \quad \quad \quad  \quad &\text{ if } \quad i = 0 \end{aligned} \right . \, ,$$\noindent with $i\in \{0,1, \dots, n-1\}$, $\lambda_i \in k$ for every $i\in \{1, \dots, n-1\}$ and $\tilde e\in E$ being an arbitrary vector.
\end{lem}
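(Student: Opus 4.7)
The plan is to exploit the cyclic structure of $f$ to compute everything in a single natural basis, then translate the defining equation $f \circ f^- \circ f = f$ of a generalized inverse into a linear system for the coefficients of $f^-$ in that basis.

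First I would verify that $B = \{e, f(e), \ldots, f^{n-1}(e)\}$ is a basis of $E$. Indeed, from $a_f(x) = x^n$ it follows that $f^n = 0$, while by hypothesis $f^{n-1}(e) \neq 0$. A standard argument (apply $f^{n-1-j}$ to a dependence relation to extract successive coefficients, using $f^n = 0$) shows $B$ is linearly independent, and since $\dim_k E = n$, it is a basis. In particular, every endomorphism of $E$ is uniquely determined by its values on $B$, so to parametrise the candidates $f^-$ I write
\begin{equation*}
f^-(f^j(e)) \; = \; \sum_{k=0}^{n-1} a_{jk}\, f^k(e) \qquad (0 \leq j \leq n-1).
\end{equation*}

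Next I would translate the condition $f \circ f^- \circ f = f$ into constraints on the $a_{jk}$ by evaluating both sides on each basis vector $f^i(e)$. For $i = n-1$ the equation becomes $f(f^-(0)) = 0$, automatically satisfied; for $0 \leq i \leq n-2$, setting $j = i+1 \in \{1, \ldots, n-1\}$ it reads
\begin{equation*}
f\bigl(f^-(f^j(e))\bigr) \; = \; \sum_{k=0}^{n-2} a_{jk}\, f^{k+1}(e) \; = \; f^j(e),
\end{equation*}
using $f^n(e)=0$ to drop the $k=n-1$ term. Comparing coordinates in the basis $B$ forces $a_{j,j-1} = 1$ and $a_{jk} = 0$ for all $k \in \{0, \ldots, n-2\} \setminus \{j-1\}$, while the coefficient $a_{j,n-1}$ remains entirely unconstrained; renaming $\lambda_j := a_{j,n-1}$, this is exactly
$$f^-(f^j(e)) \; = \; f^{j-1}(e) + \lambda_j\, f^{n-1}(e).$$
Finally, the equation $f \circ f^- \circ f = f$ imposes no condition on $f^-(e)$ itself (the $j = 0$ image never appears after the outer $f$ absorbs a factor), so $f^-(e) = \tilde e$ may be chosen arbitrarily in $E$.

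Conversely, given any choice of scalars $\lambda_1, \ldots, \lambda_{n-1} \in k$ and any vector $\tilde e \in E$, defining $f^-$ by the stated formulas on the basis $B$ and extending linearly, a direct verification on each basis element shows $f \circ f^- \circ f = f$, establishing that every such map is a generalized inverse. The only genuinely delicate point, and the one I would be most careful with, is the bookkeeping of which coefficient $a_{jk}$ is determined and which is free: the cancellation at $k = n-1$ (due to $f^n(e) = 0$) is precisely what produces the one-parameter family $\lambda_j$ attached to each $j \geq 1$, and the absence of any equation at $j = 0$ is what gives the arbitrary $\tilde e$. Everything else is routine linear algebra in the cyclic basis.
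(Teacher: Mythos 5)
Your proof is correct and follows essentially the same route as the paper's: both reduce the condition $f\circ f^-\circ f=f$ to $(f\circ f^-)_{\vert_{\im f}}=\text{Id}_{\vert_{\im f}}$, observe that $e\notin \im f$ leaves $f^-(e)$ completely free, and identify the remaining freedom in $f^-(f^i(e))$ for $i\geq 1$ with the choice of a preimage of $f^i(e)$, i.e.\ an element of $f^{i-1}(e)+\Ker f$ with $\Ker f=\langle f^{n-1}(e)\rangle$. Your coordinate computation in the cyclic basis is simply a more explicit spelling-out of that preimage-plus-kernel argument, with the added (and welcome) verification of the converse direction, which the paper leaves implicit.
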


\begin{proof} Since $f \circ f^- \circ f = f$, we have that $f^-$ is a generalized inverse of $f$ if and only if $(f \circ f^-)_{\vert_{\text{Im f}}} = \text{Id}_{\vert_{\text{Im f}}}$. Hence, since $e\notin \text{Im } f$ we have that $f^- (e) = {\tilde e}$, where ${\tilde e} \in E$ is an arbitrary vector.

Moreover, bearing in mind that $(f\circ f^-) (f^i (e)) = f^i (e)$ for all $i\geq 1$, one has that $$f^- (f^i (e)) \in f^{-1} (f^i (e)) + \Ker f\, ,$$\noindent and we get that $$f^- (f^i(e)) = f^{i-1} (e) + \lambda_i f^{n-1} (e)$$\noindent for all $i\in \{1, \dots, n-1\}$.

Accordingly, since $\{e, f(e), \dots, f^{n-1} (e)$ is a Jordan basis of $E$ induced by $f$, the claim is deduced.
\end{proof}

We can reformulate the statement of Lemma \ref{l:char-gen-inv-hom-n2} as follows:

\begin{cor} \label{c:char-gen-inv-hom-n} Let $E$ be a $k$-vector space of dimension n and let $f\in \ed_k (E)$ be an endomorphism with annihilating polynomial $a_f (x) )= x^n$. If $e\in E$ is a vector such that $f^{n-1} (e) \ne 0$, then every generalized inverse $f^- \in \ed_k (E)$ is determined by the expressions \begin{equation} \label{eq:expl-1-inv-nilpo-lm} f^- (f^i (e)) = \left \{ \begin{aligned} f^{i-1} (e) + \lambda_i f^{n-1} (e) \quad &\text{ if } \quad i\geq 1 \\ \sum_{h= 0}^{n-1} \alpha_h f^h (e) \quad &\text{ if } \quad i = 0 \end{aligned} \right . \, ,\end{equation}\noindent with $i\in \{0, \dots, n-1\}$, $\lambda_i, \alpha_h \in k$ for every $i\in \{1, \dots, n-1\}$ and $h \in \{0, \dots, n-1\}$.
\end{cor}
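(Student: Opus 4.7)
The plan is to deduce this corollary as a direct reformulation of Lemma \ref{l:char-gen-inv-hom-n2}, where the only substantive change is replacing the arbitrary vector $\tilde e \in E$ appearing in the formula for $f^-(e)$ by its expansion in the Jordan basis of $E$ determined by $f$.

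First, I would apply Lemma \ref{l:char-gen-inv-hom-n2} verbatim: every generalized inverse $f^- \in \ed_k(E)$ satisfies $f^-(f^i(e)) = f^{i-1}(e) + \lambda_i f^{n-1}(e)$ for $i \in \{1, \dots, n-1\}$ and $f^-(e) = \tilde e$ for some $\tilde e \in E$ that may be chosen freely. This already exhausts the $f^-$-values on the vectors $e, f(e), \dots, f^{n-1}(e)$, and any endomorphism of $E$ is determined by its values on a basis.

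Second, I would argue that under the standing hypotheses --- $\dim_k E = n$, $a_f(x) = x^n$, and $f^{n-1}(e) \neq 0$ --- the family $\{e, f(e), \dots, f^{n-1}(e)\}$ is a Jordan basis of $E$. This is exactly the fact invoked at the end of the proof of Lemma \ref{l:char-gen-inv-hom-n2}, and also follows from the general construction of Subsection \ref{ss:nilpotent-basis} applied to the nilpotent endomorphism $f$, since there is only one Jordan block (of size $n$). Consequently, the vector $\tilde e$ admits a unique expansion
\[
\tilde e = \sum_{h=0}^{n-1} \alpha_h f^h(e),
\]
with $\alpha_h \in k$, and as $\tilde e$ ranges over $E$ these scalars range freely over $k^n$.

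Substituting this expansion into the formula for $f^-(e)$ given by Lemma \ref{l:char-gen-inv-hom-n2} yields precisely the two-case expression (\ref{eq:expl-1-inv-nilpo-lm}), with the parameters $(\lambda_1, \dots, \lambda_{n-1}, \alpha_0, \dots, \alpha_{n-1})$ parametrizing the generalized inverses. There is no real obstacle here; the only point worth checking is that the parametrization is still faithful, which is immediate since $\{e, f(e), \dots, f^{n-1}(e)\}$ is a basis and the coefficients $\alpha_h$ are uniquely determined by $\tilde e$.
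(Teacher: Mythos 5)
Your proposal is correct and coincides with the paper's own treatment: the paper gives no separate proof, presenting the corollary as a direct reformulation of Lemma \ref{l:char-gen-inv-hom-n2} obtained by expanding the arbitrary vector $\tilde e$ in the basis $\{e, f(e), \dots, f^{n-1}(e)\}$, which is exactly what you do. Your additional remark that the coefficients $\alpha_h$ are uniquely determined and range freely is the only (routine) point needing verification, and you handle it correctly.
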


Furthermore, similar to Lemma \ref{l:char-gen-inv-hom-n2} one can prove that

\begin{lem} \label{l:char-gen-inv-hom-n2} If $E$ is a finite-dimensional $k$-vector space, $f\in \ed_k (E)$ is an endomorphism with annihilating polynomial $a_f (x) )= x^n$ and $$\bigcup_{j= 1}^r \{e_j, f (e_j), \dots, f^{n_j - 1} (e_j)\}$$\noindent is a Jordan basis of $E$ induced by $f$, then  every generalized inverse $f^- \in \ed_k (E)$ is determined by the expressions \begin{equation} \label{eq:expl-1-inv-nilpo-lm-2} f^- (f^i (e_j)) = \left \{ \begin{aligned} f^{i-1} (e_j) + \sum_{s=1}^r \lambda_{j,i}^s f^{n_s-1} (e_s) \quad &\text{ if } \quad i\geq 1 \\  \sum_{s=1}^r [\sum_{h= 0}^{n_s-1} \alpha_{j,h}^s f^h (e_s)] \quad &\text{ if } \quad i = 0 \end{aligned} \right . \, ,\end{equation}\noindent with $\lambda_{j,i}^s,  \alpha_{j,h}^s \in k$ for each $j,s\in \{1, \dots, r\}$, $i\in \{0, \dots, n_j-1\}$ and $h \in \{0, \dots, n_s-1\}$.

\end{lem}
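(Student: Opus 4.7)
The plan is to mimic the argument of the previous single-block Lemma, using the observation already exploited there that $f \circ f^- \circ f = f$ is equivalent to $(f \circ f^-)_{\vert_{\text{Im}\, f}} = \text{Id}_{\vert_{\text{Im}\, f}}$. The only new ingredient is that the Jordan basis now has several strings, so I need to track how the kernel of $f$ decomposes across them.

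First, I would split the basis into two parts: the generators $\{e_j\}_{j=1}^r$, which do not lie in $\text{Im}\, f$ (the image is spanned by the strictly positive-power vectors $f^i(e_j)$ with $1 \leq i \leq n_j - 1$), and the images $\{f^i(e_j)\}$ with $i\geq 1$. On the first part $f^-$ is totally unconstrained, so each $f^-(e_j)$ is an arbitrary vector of $E$, which when expanded in the Jordan basis yields the second line of (\ref{eq:expl-1-inv-nilpo-lm-2}) with the free coefficients $\alpha_{j,h}^s \in k$.

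For the second part I use the condition $f(f^-(f^i(e_j))) = f^i(e_j)$ for $i\geq 1$, which forces
\[
f^-(f^i(e_j)) \in f^{i-1}(e_j) + \Ker f.
\]
The key computation is to identify $\Ker f$ in the chosen Jordan basis: the vectors $f^{n_s-1}(e_s)$ for $s=1,\dots,r$ are killed by $f$, and since the sum of block lengths gives the dimension of $E$ while the image has dimension $\sum_s (n_s-1)$, the kernel has dimension $r$. Hence $\{f^{n_s-1}(e_s)\}_{s=1}^r$ is a basis of $\Ker f$, and the first line of (\ref{eq:expl-1-inv-nilpo-lm-2}) follows with the free parameters $\lambda_{j,i}^s \in k$.

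Finally, I would check that any assignment of the free parameters $\alpha_{j,h}^s$ and $\lambda_{j,i}^s$ indeed produces an endomorphism satisfying $f \circ f^- \circ f = f$: this is immediate on the image by construction, and imposes no further requirement on the generators since the equation is vacuous off $\text{Im}\, f$. The only mildly delicate step is the dimension count that pins down $\Ker f$; once that is in place the statement is a straightforward parametrization of preimages modulo the kernel, exactly parallel to Corollary \ref{c:char-gen-inv-hom-n} applied block-by-block.
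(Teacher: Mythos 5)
Your proof is correct and follows essentially the same route the paper intends: the paper omits an explicit argument for this lemma, stating only that it is proved ``similar to'' the single-block case, and your proposal is precisely that extension --- reducing $f\circ f^-\circ f=f$ to $(f\circ f^-)_{\vert_{\operatorname{Im} f}}=\text{Id}$, leaving $f^-(e_j)$ free since $e_j\notin \operatorname{Im} f$, and parametrizing the remaining values by cosets of $\Ker f=\langle f^{n_s-1}(e_s)\rangle_{s=1}^r$. The dimension count identifying $\Ker f$ is the one genuinely new point in the multi-block setting, and you handle it correctly.
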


Let us again consider a finite potent endomorphism $\varphi \in \ed_k (V)$ of an arbitrary $k$-vector space $V$ with $i(\varphi) = r$. If $V = W_\varphi \oplus U_\varphi$ is the AST-decomposition induced by $\varphi$, let

 \begin{equation} \label{eq:jordan-basis-B-fp} B_\varphi = \underset {\begin{aligned} s_h &\in S_{{\alpha}_h (V,\varphi_{\vert_{U_\varphi}})} \\ 1 &\leq h \leq r \end{aligned}} {\bigcup} \{{ {v_{s_h}}}, \varphi ({{ v_{s_h}}}), \dots , \varphi^{h-1} ({ {v_{s_h}}})\}\end{equation}

be a Jordan basis of $U_\varphi$ induced by $\varphi_{\vert_{U_\varphi}}$ (see Subsection \ref{ss:nilpotent-basis}).

Let us now denote $${\overline S}_{\alpha, \varphi} = S_{\alpha_1 (V,\varphi_{\vert_{U_\varphi}})} \cup \dots \cup S_{\alpha_r (V,\varphi_{\vert_{U_\varphi}})}$$\noindent and $\beta (V,\varphi) = \# {\overline S}_{\alpha, \varphi}$.

Accordingly, similar to Lemma \ref{l:char-gen-inv-hom-n2}, it is easy to check that

\begin{lem} \label{l:char-gen-f-p-basi-r-345} Given  an arbitrary $k$-vector space $V$ and a finite potent endomorphism $\varphi \in \ed_k (V)$ with $i(\varphi) = r$ and AST-decomposition $V = W_\varphi \oplus U_\varphi$, fixing a Jordan basis $B_\varphi$ of  $U_\varphi$ as in (\ref{eq:jordan-basis-B-fp}), then  every generalized inverse $(\varphi_{\vert_{U_\varphi}})^- \in \ed_k (U_\varphi)$ is determined by the expressions $$(\varphi_{\vert_{U_\varphi}})^- (\varphi^i (v_{s_h})) = \left \{ \begin{aligned} \varphi^{i-1} (v_{s_h}) + \sum_{s_t \in {\overline S}_{\alpha, \varphi}} \lambda^i_{s_h,s_t} \varphi^{t-1} (v_{s_t}) \quad &\text{ if } \quad 1\leq  i\leq h-1 \\ \quad {v} \quad \quad \quad \quad  \quad  \quad \quad \quad \quad &\text{ if } \quad i = 0 \end{aligned} \right . \, ,$$\noindent with $v\in U_\varphi$, $1 \leq h \leq r$, $i\in \{0,1, \dots, h-1\}$, $\lambda^i_{s_h, s_t} \in k$ and $\lambda^i_{s_h, s_t} = 0$ for almost all $s_t \in {\overline S}_{\alpha, \varphi}$ (for each $s_h \in S_{{\alpha}_h (V,\varphi_{\vert_{U_\varphi}})}$).
\end{lem}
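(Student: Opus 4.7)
The plan is to mimic the argument of the finite-dimensional case (the proof of Lemma \ref{l:char-gen-inv-hom-n2}), adapting it to the Jordan decomposition of $U_\varphi$ described in Subsection \ref{ss:nilpotent-basis}. The central observation remains that $\varphi \circ (\varphi_{\vert_{U_\varphi}})^- \circ \varphi = \varphi$ on $U_\varphi$ is equivalent to the identity $(\varphi \circ (\varphi_{\vert_{U_\varphi}})^-)_{\vert_{\operatorname{Im} (\varphi_{\vert_{U_\varphi}})}} = \operatorname{Id}_{\vert_{\operatorname{Im}(\varphi_{\vert_{U_\varphi}})}}$.

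First, I would read off from the Jordan basis $B_\varphi$ in (\ref{eq:jordan-basis-B-fp}) the explicit descriptions of the image and the kernel of $\varphi_{\vert_{U_\varphi}}$. The image is spanned by the vectors $\varphi^i(v_{s_h})$ with $s_h \in S_{\alpha_h(V,\varphi_{\vert_{U_\varphi}})}$ and $1 \leq i \leq h-1$, while the kernel is spanned precisely by the ``top'' vectors of each Jordan block, i.e. by $\{\varphi^{t-1}(v_{s_t}) : s_t \in \overline S_{\alpha,\varphi}\}$. This last description is what justifies the shape of the correction terms in the statement.

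Next, since any endomorphism of $U_\varphi$ is determined by its values on the basis $B_\varphi$, I would split the analysis into two cases. For $i = 0$ the vector $v_{s_h}$ lies outside $\operatorname{Im}(\varphi_{\vert_{U_\varphi}})$, so the defining equation places no constraint on $(\varphi_{\vert_{U_\varphi}})^-(v_{s_h})$, which may be an arbitrary vector $v \in U_\varphi$. For $1 \leq i \leq h-1$ one has $\varphi^i(v_{s_h}) \in \operatorname{Im}(\varphi_{\vert_{U_\varphi}})$, so the equation forces
\[
(\varphi_{\vert_{U_\varphi}})^-(\varphi^i(v_{s_h})) \in \varphi^{-1}(\varphi^i(v_{s_h})) + \ker \varphi_{\vert_{U_\varphi}}.
\]
Choosing the distinguished preimage $\varphi^{i-1}(v_{s_h})$ and expanding an arbitrary element of $\ker \varphi_{\vert_{U_\varphi}}$ against the basis $\{\varphi^{t-1}(v_{s_t})\}_{s_t \in \overline S_{\alpha,\varphi}}$ yields exactly the formula in the statement, where the coefficients $\lambda^i_{s_h, s_t}$ must be zero for almost all $s_t$ so that the right-hand side is a genuine (finite) linear combination in $U_\varphi$.

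Finally, the converse is a routine check: any assignment of the stated form extends $k$-linearly to a well-defined endomorphism of $U_\varphi$, and applying $\varphi$ annihilates every correction term $\varphi^{t-1}(v_{s_t})$, so $\varphi \circ (\varphi_{\vert_{U_\varphi}})^- \circ \varphi$ agrees with $\varphi$ on every element of $B_\varphi$. The only real subtlety compared to the finite-dimensional version of Lemma \ref{l:char-gen-inv-hom-n2} is the bookkeeping needed to ensure that sums indexed by the possibly infinite set $\overline S_{\alpha,\varphi}$ remain finite, which is precisely the role of the ``almost all zero'' hypothesis on the $\lambda^i_{s_h, s_t}$; this is the only place where some care (rather than a direct transcription of the finite case) is required.
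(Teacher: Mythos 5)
Your proposal is correct and follows exactly the route the paper intends: the paper gives no separate proof of this lemma, stating only that it is ``similar to Lemma \ref{l:char-gen-inv-hom-n2}'', whose proof is precisely your argument (reduce $\varphi\circ\varphi^-\circ\varphi=\varphi$ to $(\varphi\circ\varphi^-)_{\vert_{\operatorname{Im}\varphi}}=\operatorname{Id}$, leave $\varphi^-$ arbitrary on the chain generators $v_{s_h}\notin\operatorname{Im}\varphi_{\vert_{U_\varphi}}$, and describe $\varphi^-(\varphi^i(v_{s_h}))$ as the distinguished preimage $\varphi^{i-1}(v_{s_h})$ plus an element of $\ker\varphi_{\vert_{U_\varphi}}=\langle\varphi^{t-1}(v_{s_t})\rangle_{s_t\in\overline S_{\alpha,\varphi}}$). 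Your explicit attention to the finiteness of the sums over the possibly infinite index set $\overline S_{\alpha,\varphi}$ is the only genuinely new point in the infinite-dimensional setting, and you handle it correctly.
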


Writing $$(\varphi_{\vert_{U_\varphi}})^- (v_{s_h}) =   \underset {\begin{aligned} s_l &\in {\overline S}_{\alpha, \varphi} \\  0 \leq j &\leq l-1 \end{aligned}} \sum \alpha_{s_h,s_l}^j \varphi^j (v_{s_l}) \in U_\varphi\, ,$$\noindent one has that

\begin{cor} \label{c:char-gen-f-p-basi-r-345} Given  an arbitrary $k$-vector space $V$ and a finite potent endomorphism $\varphi \in \ed_k (V)$ with $i(\varphi) = r$ and AST-decomposition $V = W_\varphi \oplus U_\varphi$, fixing a Jordan basis $B_\varphi$ of  $U_\varphi$ as in (\ref{eq:jordan-basis-B-fp}), then  every generalized inverse $(\varphi_{\vert_{U_\varphi}})^- \in \ed_k (U_\varphi)$ is determined by the expressions $$(\varphi_{\vert_{U_\varphi}})^- (\varphi^i (v_{s_h})) = \left \{ \begin{aligned} \varphi^{i-1} (v_{s_h}) + \sum_{s_t \in {\overline S}_{\alpha, \varphi}} \lambda^i_{s_h,s_t} \varphi^{t-1} (v_{s_t}) \quad &\text{ if } \quad 1\leq  i\leq h-1 \\ \quad \underset {\begin{aligned} s_l &\in {\overline S}_{\alpha, \varphi} \\ 0 \leq j &\leq l-1 \end{aligned}} \sum \alpha_{s_h,s_l}^j \varphi^j (v_{s_l}) \quad \quad &\text{ if } \quad i = 0 \end{aligned} \right . \, ,$$\noindent with $1 \leq h \leq r$, $i\in \{0,1, \dots, h-1\}$, $\lambda^i_{s_h, s_t}, \alpha_{s_h, s_l}^j \in k$, and $\lambda^i_{s_h,s_t} = 0 = \alpha_{s_h, s_l}^j$ for almost all $s_t, s_l \in {\overline S}_{\alpha, \varphi}$ and $0 \leq j \leq l-1$.
\end{cor}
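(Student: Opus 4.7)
The plan is to derive the corollary as an immediate reformulation of Lemma \ref{l:char-gen-f-p-basi-r-345}. Indeed, the only difference between the two statements is in the case $i=0$: the lemma records that the value $(\varphi_{\vert_{U_\varphi}})^{-}(v_{s_h})$ may be any vector $v\in U_\varphi$, whereas the corollary expands this arbitrary vector in terms of the fixed Jordan basis $B_\varphi$.

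First, I would invoke Lemma \ref{l:char-gen-f-p-basi-r-345} to obtain the required formula for $i\geq 1$, which is literally identical in both statements. Then, for $i=0$, I would use the decomposition (\ref{eq:decomp}) applied to the nilpotent endomorphism $\varphi_{\vert_{U_\varphi}}$, which shows that
\[
B_\varphi \;=\; \underset {\begin{aligned} s_l &\in S_{{\alpha}_l (V,\varphi_{\vert_{U_\varphi}})} \\ 1 &\leq l \leq r \end{aligned}} {\bigcup} \{v_{s_l}, \varphi(v_{s_l}), \dots, \varphi^{l-1}(v_{s_l})\}
\]
is a basis of $U_\varphi$. Consequently every vector $v \in U_\varphi$ admits a unique expansion
\[
v \;=\; \underset {\begin{aligned} s_l &\in {\overline S}_{\alpha, \varphi} \\  0 \leq j &\leq l-1 \end{aligned}} \sum \alpha_{s_h,s_l}^j \,\varphi^j (v_{s_l}),
\]
in which, since the expansion of a single vector in a basis involves only finitely many basis elements, the scalars $\alpha_{s_h,s_l}^j \in k$ are zero for almost all indices. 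Substituting this expansion into the case $i=0$ of Lemma \ref{l:char-gen-f-p-basi-r-345} yields the corollary.

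Conversely, given any family of scalars $\lambda^i_{s_h,s_t}, \alpha^j_{s_h,s_l}\in k$ (almost all zero) as in the statement, the right-hand side defines a vector in $U_\varphi$, so Lemma \ref{l:char-gen-f-p-basi-r-345} guarantees that this formula does determine a generalized inverse of $\varphi_{\vert_{U_\varphi}}$. Thus the parametrization described in the corollary exhausts exactly the set of generalized inverses characterized in the lemma.

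There is no genuine obstacle here: the statement is pure bookkeeping translating the intrinsic formulation (where $(\varphi_{\vert_{U_\varphi}})^{-}(v_{s_h})$ is an arbitrary vector of $U_\varphi$) into coordinates with respect to the chosen Jordan basis $B_\varphi$. The only point that deserves attention is the finiteness condition on the coefficients, which is automatic from the definition of a basis in the possibly infinite-dimensional space $U_\varphi$.
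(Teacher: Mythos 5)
Your proof is correct and follows exactly the route the paper takes: the corollary is obtained from Lemma \ref{l:char-gen-f-p-basi-r-345} simply by expanding the arbitrary vector $(\varphi_{\vert_{U_\varphi}})^-(v_{s_h}) \in U_\varphi$ in the fixed Jordan basis $B_\varphi$, with the almost-all-zero condition on the coefficients automatic from the definition of a basis. The paper presents this as an immediate reformulation (introducing the expansion just before the corollary's statement), and your write-up spells out the same bookkeeping.
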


\begin{cor} \label{c:char-0-GD-inv-2345} With the notation of Corollary \ref{c:char-G-D-fp}, $\varphi_2^{GD} = 0$ if and only if $i (\varphi) \leq 1$.
\end{cor}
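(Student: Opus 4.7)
The plan is to chase the equivalence through the three reformulations provided by the preceding results, reducing it to the elementary fact that the zero map is a generalized inverse of a linear endomorphism only when that endomorphism itself is zero.

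First I would unpack the hypothesis using Corollary \ref{c:char-G-D-fp}: the endomorphism $\varphi_2^{GD}$ vanishes identically on $W_\varphi$ by construction, so $\varphi_2^{GD} = 0$ is equivalent to $(\varphi^{GD})_{\vert_{U_\varphi}} = 0$. By Proposition \ref{p:char-G-D-fp}, $(\varphi^{GD})_{\vert_{U_\varphi}}$ must be a generalized inverse of $\varphi_{\vert_{U_\varphi}}$, so the question becomes: when is the zero map a generalized inverse of $\varphi_{\vert_{U_\varphi}}$? Plugging into the defining identity $\varphi \circ (\varphi^{GD}) \circ \varphi = \varphi$ restricted to $U_\varphi$, one gets $\varphi_{\vert_{U_\varphi}} \circ 0 \circ \varphi_{\vert_{U_\varphi}} = \varphi_{\vert_{U_\varphi}}$, i.e. $\varphi_{\vert_{U_\varphi}} = 0$.

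Next I would translate $\varphi_{\vert_{U_\varphi}} = 0$ into a statement about the index. By Definition \ref{def:index-inf}, $i(\varphi)$ is the nilpotency order of $\varphi_{\vert_{U_\varphi}}$. If $i(\varphi) = 0$, then $V$ is finite-dimensional, $U_\varphi = 0$ and $\varphi_{\vert_{U_\varphi}}$ is trivially the zero map; if $i(\varphi) = 1$, then $\varphi_{\vert_{U_\varphi}}$ has nilpotency order one, which by definition means $\varphi_{\vert_{U_\varphi}} = 0$. Conversely, if $\varphi_{\vert_{U_\varphi}} = 0$ then its nilpotency order is at most $1$, so $i(\varphi) \leq 1$.

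Combining these two steps yields the chain of equivalences $\varphi_2^{GD} = 0 \iff \varphi_{\vert_{U_\varphi}} = 0 \iff i(\varphi) \leq 1$, which is precisely the statement. No genuine obstacle appears here; the argument is essentially a bookkeeping exercise once the structural description of $\varphi^{GD}$ on the summands $W_\varphi$ and $U_\varphi$ given by Proposition \ref{p:char-G-D-fp} and Corollary \ref{c:char-G-D-fp} is in hand. The only point requiring a small amount of care is the case $i(\varphi) = 0$, where $U_\varphi$ is the zero subspace and all statements about $\varphi_{\vert_{U_\varphi}}$ hold vacuously, so the equivalence still holds.
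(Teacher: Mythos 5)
Your proof is correct, but it takes a genuinely different route from the paper's. The paper's one-line argument invokes Lemma \ref{l:char-gen-f-p-basi-r-345}: in the explicit Jordan-basis description of a generalized inverse of $\varphi_{\vert_{U_\varphi}}$, the values on the vectors $\varphi^i(v_{s_h})$ with $1\leq i\leq h-1$ are forced to contain the (nonzero) term $\varphi^{i-1}(v_{s_h})$, so a generalized inverse can vanish only when every Jordan chain has length one, i.e.\ $\Ker\varphi = U_\varphi$, i.e.\ $i(\varphi)\leq 1$. You bypass the Jordan-basis machinery entirely and reduce to the defining identity of a generalized inverse: substituting $f^{-}=0$ into $f\circ f^{-}\circ f = f$ forces $f=0$, so the zero map is a generalized inverse of $\varphi_{\vert_{U_\varphi}}$ precisely when $\varphi_{\vert_{U_\varphi}}=0$, which by Definition \ref{def:index-inf} is exactly $i(\varphi)\leq 1$. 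Your version is more elementary and basis-free, and it makes the forward implication completely transparent; the paper's version has the advantage of sitting inside the explicit parametrization of all generalized inverses that the rest of the section is built on. One shared caveat (present in the paper's own proof as well, so not a gap on your part): the converse implication $i(\varphi)\leq 1 \Rightarrow \varphi_2^{GD}=0$ is only valid if the statement is read as ``$0$ is an admissible choice of $\varphi_2^{GD}$,'' since when $\varphi_{\vert_{U_\varphi}}=0$ \emph{every} endomorphism of $U_\varphi$ is a generalized inverse of it; this is the reading the paper uses in the corollary that follows, and your step ``the question becomes: when is the zero map a generalized inverse'' implicitly adopts it.
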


\begin{proof} It follows from Lemma \ref{l:char-gen-f-p-basi-r-345} that $\varphi_2^{GD} = 0$ if and only if $\Ker \varphi = U_\varphi$, from where the claim is proved.
\end{proof}

\begin{cor} Given a finite potent endomorphism $\varphi \in \ed_k (V)$, one has that the Drazin inverse $\varphi^D \in \ed_k (V)$ is a G-Drazin inverse of $\varphi$ if and only if $i (\varphi) \leq 1$.
\end{cor}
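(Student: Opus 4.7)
The plan is to apply the structural characterization of G-Drazin inverses given in Proposition \ref{p:char-G-D-fp} directly to the Drazin inverse $\varphi^D$. Recall from Subsection \ref{ss:drazin-inverse} that $\varphi^D$ acts as $(\varphi_{\vert_{W_\varphi}})^{-1}$ on $W_\varphi$ and as $0$ on $U_\varphi$. In particular, both $W_\varphi$ and $U_\varphi$ are $\varphi^D$-invariant, and the condition $(\varphi^D)_{\vert_{W_\varphi}} = (\varphi_{\vert_{W_\varphi}})^{-1}$ holds tautologically. Hence two of the three requirements of Proposition \ref{p:char-G-D-fp} are automatic, and $\varphi^D$ is a G-Drazin inverse of $\varphi$ if and only if the zero endomorphism $0 \in \ed_k(U_\varphi)$ is a generalized inverse of $\varphi_{\vert_{U_\varphi}}$.

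Next, I would observe that $0 \in \ed_k(U_\varphi)$ satisfies $\varphi_{\vert_{U_\varphi}} \circ 0 \circ \varphi_{\vert_{U_\varphi}} = \varphi_{\vert_{U_\varphi}}$ precisely when $\varphi_{\vert_{U_\varphi}} = 0$, i.e.\ precisely when the nilpotency order of $\varphi_{\vert_{U_\varphi}}$ is at most $1$. By Definition \ref{def:index-inf}, this is exactly the condition $i(\varphi) \leq 1$, which gives both implications.

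Alternatively, one can bypass the direct check and combine Corollary \ref{c:char-G-D-fp} with Corollary \ref{c:char-0-GD-inv-2345}: every G-Drazin inverse of $\varphi$ has the form $\varphi^D + \varphi_2^{GD}$, so $\varphi^D$ is itself a G-Drazin inverse if and only if one can choose $\varphi_2^{GD} = 0$, which by Corollary \ref{c:char-0-GD-inv-2345} happens exactly when $i(\varphi) \leq 1$.

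There is no real obstacle here; the statement is a direct corollary of the previously established structure theorems. The only point to be careful about is the trivial but crucial observation that the zero map is a generalized inverse of a linear map only when that linear map vanishes—once this is noted, the equivalence with the index condition is immediate from the definition of $i(\varphi)$.
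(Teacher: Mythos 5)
Your proposal is correct, and your ``alternative'' argument (combining Corollary \ref{c:char-G-D-fp} with Corollary \ref{c:char-0-GD-inv-2345}) is exactly the proof the paper gives. Your primary route --- applying Proposition \ref{p:char-G-D-fp} directly to $\varphi^D$ and noting that the zero map is a generalized inverse of $\varphi_{\vert_{U_\varphi}}$ only when $\varphi_{\vert_{U_\varphi}}=0$, i.e.\ $i(\varphi)\leq 1$ --- is just a direct unwinding of the same structure theorem, so the two approaches are essentially identical.
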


\begin{proof} If $\varphi = \varphi_1 + \varphi_2$ is the CN-decomposition, then according to Corollary \ref{c:char-G-D-fp} one has that $\varphi^D \in \ed_k (V)$ is a G-Drazin inverse of $\varphi$ if and only if $0$ is a G-Drazin inverse of $\varphi_2$ and, bearing in mind Corollary \ref{c:char-0-GD-inv-2345}, the statement is deduced.
\end{proof}

Accordingly, from Proposition \ref{p:char-G-D-fp} and Lemma \ref{l:char-gen-f-p-basi-r-345} we have characterized all the G-Drazin inverses of a finite potent endomorphism $\varphi \in \ed_k (V)$. Note that, in general, a G-Drazin inverse $\varphi^{GD} \in \ed_k (V)$ is not a finite potent endomorphism.

If we denote by $X_\varphi^{\{GD\}}$ the set of all G-Drazin inverses of a finite potent endomorphism $\varphi \in \ed_k (V)$, fixing a Jordan basis $B_\varphi$ of  $U_\varphi$ as in (\ref{eq:jordan-basis-B-fp}), with the notation of Subsection \ref{ss:nilpotent-basis}, $i(\varphi) = r$ and \noindent we have a bijection \begin{equation} \label{eq:bij-chr-g-dr-3487} \begin{aligned} X_\varphi^{\{GD\}} &\overset {\sim} \longrightarrow \prod_{h= 1}^r \big (\prod_{\alpha_h (V,\varphi_{\vert_{U_\varphi}})}\big [ U_\varphi \times  \prod_{i= 1}^{h-1} (\bigoplus_{\beta (V,\varphi)} k) \big ] \big )  \\ \varphi^{GD} &\longmapsto \big ( \big (\varphi^{GD} (s_h), ((\lambda^i_{s_h,s_t})_{s_t \in {\overline S}_{\alpha, \varphi}})_{1\leq i\leq h-1}\big )_{s_h \in S_{\alpha_h (V,\varphi_{\vert_{U_\varphi}})}} \big )_{h\in \{1,\dots, r\}} \end{aligned}\, .\end{equation}

\begin{exam} \label{ex:G-Dr-inf-ir} Let $k$ be an arbitrary ground field, let $V$ be a $k$-vector space of countable dimension over $k$ and let $\{{ {v_1}},{ {v_2}},{ {v_3}},\dots\}$ be a basis of $V$ indexed by the natural numbers. 

Let $\varphi \in \ed_{k} (V)$ the finite potent endomorphism defined as follows: 
$$\varphi ({ {v_i}}) = \left \{\begin{aligned} { {v_{2}}} +  { {v_{5}}}   +  { {v_{7}}} \quad   &\text{ if } \quad i = 1 \\   { {v_{1}}} +  3{ {v_{2}}} \quad   \quad   &\text{ if } \quad i = 2 \\    { {v_{4}}}  \qquad   \quad   &\text{ if } \quad i = 3 \\   { {v_{1}}} - { {v_{3}}} \quad   \quad   &\text{ if } \quad i = 4 \\ -{ {v_{3}}} +  2{ {v_{5}}}   +  2{ {v_{7}}} \quad   &\text{ if } \quad i = 5 \\    3{ {v_{i+1}}} \qquad \quad  &\text{ if } \quad i = 5h + 1 \\  { {0}} \qquad \quad   &\text{ if } \quad i = 5h + 2 \\ -{ {v_{i-2}}} + 2{ {v_{i+1}}}  \quad  &\text{ if } \quad i = 5h + 3 \\ { {v_{i-2}}} + { {v_{i+1}}} \quad   &\text{ if } \quad i = 5h + 4 \\ - { {v_{i-4}}} + 5{ {v_{i-3}}}  \quad  &\text{ if } \quad i = 5h + 5    \end{aligned} \right .$$\noindent for all $h\geq 1$.

We have that the AST-decomposition $V = U_\varphi \oplus W_\varphi$ is determined by the subspaces $$W_\varphi = \langle  { {v_{1}}}, \,  { {v_{2}}}, \,  { {v_{3}}}, \,  { {v_{4}}}, \,  { {v_{5}}} +  { {v_{7}}}\rangle \text{ and }  U_\varphi = \langle { {v_{j}}} \rangle_{j\geq 6}\, .$$

In this basis of $W_\varphi$ one has that $$\varphi_{\vert_{W_\varphi}}\equiv A_{{W_\varphi}}=\begin{pmatrix} 0 & 1 & 0 & 1 & 0 \\ 1 & 3 & 0 & 0 & 0 \\ 0 & 0 & 0 & -1 & -1 \\ 0 & 0 & 1 & 0 & 0 \\ 1 & 0 & 0 & 0 & 2 \end{pmatrix}\, ,$$\noindent and, computing $A_{{W_\varphi}}^{-1}$, we obtain that the Drazin inverse of $\varphi$ is $$\varphi^D ({ {v_i}}) = \left \{\begin{aligned} 6v_1 -  2v_{2} + 3v_4 - 3v_{5}\quad   &\text{ if } \quad i = 1 \\   -2v_{1} +  v_{2} -v_4 + v_5 \quad   \quad   &\text{ if } \quad i = 2 \\  6v_1 - 2v_2 + 2 v_{4} -3v_5 \quad   &\text{ if } \quad i = 3 \\   v_{3} \quad   \quad \qquad \quad   &\text{ if } \quad i = 4 \\  3v_1 - v_2 + v_{4} - v_{5} \quad   &\text{ if } \quad i = 5 \\  0 \quad   \quad \qquad \quad   &\text{ if } \quad i \geq 6 \end{aligned} \right . \, . $$

Moreover,  we can write $U_\varphi = \underset {i\geq 2} \bigoplus H_i$ with $H_i=\langle {v_{5i - 4}},{v_{5i - 3}},{v_{5i - 2}}, v_{5i - 1}, v_{5i}\rangle$ for all $i\geq 2$, and in the same bases we have that $$\varphi_{\vert_{H_i}}\equiv A_{H_i} = \begin{pmatrix}
0 & 0 & -1 & 0 & -1 \\
3 & 0 & 0 & 1 & 5 \\
0 & 0 & 0 & 0 & 0 \\
0 & 0 & 2 & 0 & 0 \\
0 & 0 & 0 & 1 & 0 
\end{pmatrix}$$\noindent  for all $i\geq 2$.

For every $i \geq 2$, one has that $$\{v_{5i - 2}, - v_{5i - 4} + 2 v_{5i - 1}, -v_{5i - 3} + 2v_{5i}, -2v_{5i - 4} + 10 v_{5i - 3}, -6 v_{5i - 3}\}$$\noindent is a Jordan basis of $H_i$ induced by $\varphi_{\vert_{H_i}}$ and, therefore, $$\varphi_{\vert_{H_i}} \equiv  P \cdot \begin{pmatrix}
0 & 0 & 0 & 0 & 0 \\
1 & 0 & 0 & 0 & 0 \\
0 & 1 & 0 & 0 & 0 \\
0 & 0 & 1 & 0 & 0 \\
0 & 0 & 0 & 1 & 0 
\end{pmatrix} \cdot P^{-1}$$ with $$P = \begin{pmatrix} 0 &  -1 & 0 & -2 & 0 \\ 0 & 0 & -1 & 10 & -6 \\ 1 & 0 & 0 & 0 & 0 \\ 0 & 2 & 0 & 0  & 0 \\ 0 & 0 & 2 & 0 & 0 \end{pmatrix}\, .$$

Accordingly, it follows from Corollary \ref{c:char-G-D-fp} and Corollary \ref{c:char-gen-f-p-basi-r-345} that $\varphi_2^{GD}\in \ed_k (V)$ is the unique linear map such that:

\begin{itemize}

\item $\varphi_2^{GD} (v_i) = 0$ for $i\in \{1,2,3,4\}$;

\item $\varphi_2^{GD} (v_5 + v_7) = 0$;

\item $\varphi_2^{GD}(v_{5i - 2}) = \sum_{j\geq 6} \alpha_{i,j} v_j$

\item $\varphi_2^{GD}(- v_{5i - 4} + 2 v_{5i - 1}) = v_{5i - 2} + \sum_{h\geq 2} \lambda^1_{i, 5h-3} v_{5h-3}$;

\item $\varphi_2^{GD}(-v_{5i - 3} + 2v_{5i}) = - v_{5i - 4} + 2 v_{5i - 1} + \sum_{h\geq 2} \lambda^2_{i, 5h-3} v_{5h-3}$;

\item $\varphi_2^{GD}( -2v_{5i - 4} + 10 v_{5i - 3}) = -v_{5i - 3} + 2v_{5i} + \sum_{h\geq 2} \lambda^3_{i, 5h-3} v_{5h-3}$;

\item $\varphi_2^{GD}( -6 v_{5i - 3}) =  -2v_{5i - 4} + 10 v_{5i - 3} + \sum_{h\geq 2} \lambda^4_{i, 5h-3} v_{5h-3}$;

\end{itemize}

for every $i\geq 2$, and with $\alpha_{i,j} = 0$ for almost all $j\geq 6$ (for each $i \geq 2$) and $\lambda^i_{5h-3,s} = 0$ for almost all $h\geq 2$ (for every $i\geq 2$ and for every $s\in \{2,3,4,5\}$).

Thus, a non-difficult computation shows that  $\varphi_2^{GD}\in \ed_k (V)$ is the unique linear map such that:

\begin{itemize}

\item $\varphi_2^{GD} (v_i) = 0$ for $i\in \{1,2,3,4\}$;

\item $\varphi_2^{GD} (v_5) =  -\frac13 v_{6} + \frac53 v_{7} + \frac16 \sum_{h\geq 2} \lambda^2_{5h-3,5} v_{5h-3}$;

\item $\varphi_2^{GD}(v_{5i - 4}) = -\frac53 v_{5i - 4} - \frac{47}6 v_{5i - 3} - v_{5i} - \sum_{h\geq 2} (\frac12 \lambda^3_{i, 5h-3} + \frac56 \lambda^4_{i, 5h-3}) v_{5h-3}$; 

\item $\varphi_2^{GD}(v_{5i - 3}) = \frac13 v_{5i - 4} - \frac53 v_{5i-3} - \frac16 \sum_{h\geq 2} \lambda^4_{i, 5h-3} v_{5h-3}$;

\item $\varphi_2^{GD}(v_{5i - 2}) =  \sum_{j\geq 6} \alpha_{i,j} v_j$;

\item $\varphi_2^{GD}(v_{5i - 1}) = \frac56 v_{5i-4} - \frac{47}{12} v_{5i - 3} +\frac12 v_{5i-2} - \frac12 v_{5i} +$ \newline $$\sum_{h\geq 2}( \frac12 \lambda^1_{i, 5h-3} - \frac14 \lambda^3_{i, 5h-3} v_{5h-3} - \frac5{12} \lambda^4_{i, 5h-3})\, ;$$

\item $\varphi_2^{GD}(v_{5i}) =  -\frac13 v_{5i - 4} - \frac5{6} v_{5i - 3} + v_{5i - 1} - \frac56 v_{5i} + \sum_{h\geq 2} (\frac12 \lambda^2_{i, 5h-3} - \frac1{12} \lambda^4_{i, 5h-3}) v_{5h-3}$;

\end{itemize}

for every $i\geq 2$, and with $\alpha_{i,j} = 0$ for almost all $j\geq 6$ (for each $i \geq 2$) and $\lambda^i_{5h-3,s} = 0$ for almost all $h\geq 2$ (for every $i\geq 2$ and for every $s\in \{2,3,4,5\}$).

Hence, bearing in mind that $\varphi^{GD} = \varphi^D + \varphi_2^{GD}$ (Corollary \ref{c:char-G-D-fp}), one has that a G-Drazin inverse $\varphi^{GD}$ is determined by:

\begin{itemize}

\item $\varphi^{GD} (v_1) = 6v_1 -  2v_{2} + 3v_4 - 3v_{5}$;

\item $\varphi^{GD} (v_2) = -2v_{1} +  v_{2} -v_4 + v_5$;

\item $\varphi^{GD} (v_3) = 6v_1 - 2v_2 + 2 v_{4} -3v_5$;

\item $\varphi^{GD} (v_4) = v_3$;

\item $\varphi^{GD} (v_5) = 3v_1 - v_2 + v_{4} - v_{5}  -\frac13 v_{6} + \frac53 v_{7} + \frac16 \sum_{h\geq 2} \lambda^2_{5h-3,5} v_{5h-3}$;

\item $\varphi^{GD}(v_{5i - 4}) = -\frac53 v_{5i - 4} - \frac{47}6 v_{5i - 3} - v_{5i} - \sum_{h\geq 2} (\frac12 \lambda^3_{i, 5h-3} + \frac56 \lambda^4_{i, 5h-3}) v_{5h-3}$; 

\item $\varphi^{GD}(v_{5i - 3}) = \frac13 v_{5i - 4} - \frac53 v_{5i-3} - \frac16 \sum_{h\geq 2} \lambda^4_{i, 5h-3} v_{5h-3}$;

\item $\varphi^{GD}(v_{5i - 2}) =  \sum_{j\geq 6} \alpha_{i,j} v_j$;

\item $\varphi^{GD}(v_{5i - 1}) = \frac56 v_{5i-4} - \frac{47}{12} v_{5i - 3} +\frac12 v_{5i-2} - \frac12 v_{5i} +$\newline $$\sum_{h\geq 2}( \frac12 \lambda^1_{i, 5h-3} - \frac14 \lambda^3_{i, 5h-3} v_{5h-3} - \frac5{12} \lambda^4_{i, 5h-3})\, ;$$

\item $\varphi^{GD}(v_{5i}) =  -\frac13 v_{5i - 4} - \frac5{6} v_{5i - 3} + v_{5i - 1} - \frac56 v_{5i} + \sum_{h\geq 2} (\frac12 \lambda^2_{i, 5h-3} - \frac1{12} \lambda^4_{i, 5h-3}) v_{5h-3}$;

\end{itemize}

for every $i\geq 2$, and with $\alpha_{i,j} = 0$ for almost all $j\geq 6$ (for each $i \geq 2$) and $\lambda^i_{5h-3,s} = 0$ for almost all $h\geq 2$ (for every $i\geq 2$ and for every $s\in \{2,3,4,5\}$).
\end{exam}

\begin{rem} Note that the explicit expression of the bijection (\ref{eq:bij-chr-g-dr-3487}) for the finite potent endomorphism $\varphi$ of Example \ref{ex:G-Dr-inf-ir} is $$\begin{aligned} X_\varphi^{\{GD\}} &\overset {\sim} \longrightarrow \prod_{i\in \mathbb N}\big ( U_\varphi \times  \prod_{j= 1}^{4} (\bigoplus_{h\in \mathbb N} k) \big )  \\ \varphi^{GD} &\longmapsto\big  (\, \varphi^{GD}(v_{5i - 2}), ((\lambda^j_{i, 5h-3})_{h\in \mathbb N})_{1\leq j \leq 4}\, \big )_{i\in \mathbb N} \end{aligned}\, .$$

\end{rem}

To finish this section, we shall briefly study the G-Drazin inverses of a finite potent endomorphism that also are finite potent.

Let $V$ be again an arbitrary $k$-vector space and let $\varphi \in \ed_k (V)$ be a finite potent endomorphism.

With the above notation, if we denote by $(\varphi^{GD})_{\{\lambda_{s_h}^i, \alpha_{s_h}^j\}}$ to the unique linear map of $V$ such that $$(\varphi^{GD})_{\{\lambda_{s_h}^i, \alpha_{s_h}^j\}} (v) = \left \{\begin{aligned}   (\varphi_{\vert_{W_\varphi}})^{-1} (v) \quad &\text{ if } v\in W_\varphi \\ (\varphi_{\vert_{U_\varphi}})^{-} (v) \quad &\text{ if } v\in U_\varphi \end{aligned} \right . \, ,$$\noindent where $(\varphi_{\vert_{U_\varphi}})^{-}$ is the generalized inverse of $\varphi_{\vert_{U_\varphi}} \in \ed_k (U_\varphi)$ characterized in Corollary \ref{c:char-gen-f-p-basi-r-345}, it is clear that $(\varphi^{GD})_{\{\lambda_{s_h}^i, \alpha_{s_h}^j\}}$ is finite potent when \begin{equation} \label{eq:con-GD-D-f-p} \lambda_{s_h}^i = 0 = \alpha_{s_h}^j \, \text{ for almost all } \, i, \, j \, \text{ and }\, s_h\, .\end{equation}

	It is clear that the condition (\ref{eq:con-GD-D-f-p}) is sufficient for determining that $(\varphi^{GD})_{\{\lambda_{s_h}^i, \alpha_{s_h}^j\}}$ is a finite potent endomorphism, but this condition is not necessary for this fact as it is immediately deduced from the following counter-example: given a countable $k$-vector space $V$ with a basis $\{{ {v_1}},{ {v_2}},{ {v_3}},\dots\}$ indexed by the natural numbers, if we consider the finite potent endomorphism $\varphi \in \ed_k (V)$ defined as $$\varphi (v_i) = \left \{ \begin{aligned} v_1 + v_2 \quad &\text{ i } i = 1 \\ v_1 + 2v_2\quad &\text{ if } i = 2 \\ v_{i+1} \quad \quad   &\text{ if } i = 2j+1 \\ 0\, \,  \quad\quad  &\text{ if } i = 2j + 2 \end{aligned} \right .$$\noindent for every $j\geq 1$, then it is clear that  $$\varphi^{GD} (v_i) = \left \{ \begin{aligned} 2 v_1 - v_2 \quad &\text{ i } i = 1 \\ -v_1 + v_2\quad &\text{ if } i = 2 \\- v_i -  v_{i+1}  \quad  &\text{ if } i = 2j+1 \\ v_{i-1} + v_i \quad  &\text{ if } i = 2j + 2 \end{aligned} \right .$$\noindent for every $j\geq 1$, is a G-Drazin inverse of $\varphi$ that does not satisfy the condition  (\ref{eq:con-GD-D-f-p}).

\begin{rem} A remaining problem is obtaining a computable method for determining when a G-Drazin inverse $\varphi^{GD}$ of a finite potent endomorphism $\varphi$ is also finite potent.
\end{rem}

\begin{rem} If $\varphi^{GD}$ is a finite potent G-Drazin inverse of a finite potent endomorphism $\varphi$ such that $W_{\varphi^{GD}} = W_\varphi$, then $$\tr_V \varphi^{GD} = \tr_V \varphi^D \text{ and } \Det^k_V \varphi^{GD} = \Det^k_V \varphi^D\, .$$

Indeed, in this case, if $\{\lambda_1, \dots, \lambda_n\}$ are the eigenvalues of $\varphi_{\vert_{W_\varphi}}$ in the algebraic closure of $k$ (with their multiplicity), one has that:

\begin{itemize}
\item  $\tr_V (\varphi^{GD}) = \lambda_1^{-1} + \dots + \lambda_n^{-1}$;

\medskip

\item $\Det^k_V (1 + \varphi^{GD}) = \prod_{i=1}^n (1 + \lambda_i^{-1})$.
\end{itemize}

\end{rem}

\section{Explicit computation of the G-Drazin inverses of a square matrix} \label{s:G-Dra-matri-3455}

The final section of this work is devoted to offer a method for computing explicitly all the G-Drazin inverses of a square matrix.

If $k$ is an arbitrary ground field, let us consider a square matrix $A \in {\text{Mat}}_{n\times n} (k)$ with $i(A) = r$.

Fixing a $k$-vector space $E$ with dimension n, a basis $B = \{e_1, \dots, e_n\}$ of $E$ and an endomorphism $\varphi \in \ed_k (E)$ associated with $A$ in the basis $B$, from the AST-decomposition $E = W_\varphi \oplus U_\varphi$ one has that \begin{equation} \label{eq:AST-demc-matr} A = P\cdot \begin{pmatrix} A_W & 0 \\ 0 & J_U \end{pmatrix} \cdot P^{-1}\, ,\end{equation} \noindent where $\varphi_{\vert_{W_\varphi}} \equiv A_W$, $J_U$ is the Jordan matrix determined by $\varphi_{\vert_{U_\varphi}}$ and $P$ is the corresponding base change matrix.

If $A \in {\text{Mat}}_{n\times n} (k)$ is again a square matrix with $i(A) = r$ and $\rk (A^i)$ is the rank of $A^i$, we can  write $\nu_i  (A) = n - \rk (A^i)$ for all $i\in \{1, \dots, r\}$ and we can consider the non-negative integers $\{\delta_1 (A), \dots, \delta_r (A)\}$ defined from the equations:  $$\begin{aligned} \delta_r (A)&= \nu_r (A) - \nu_{r-1} (A) \\ 2 \delta_r (A) + \delta_{r-1} (A) &= \nu_r (A) - \nu_{r-2} (A) \\ &\, \, \, \vdots \\ (r-1) \delta_r (A) + \dots + 2\delta_3 (A) + \delta_2 (A) &= \nu_r (A) - \nu_1 (A) \\ r\delta_r (A) + (r-1)\delta_{r-1} (A) + \dots + 2 \delta_2 (A) + \delta_1 (A) &= \nu_r (A)  \end{aligned} \, \quad  .$$

From these relations it is clear that $$\nu_1 (A) = \sum_{i = 1}^r \delta_i (A)  \text{ and }\sum_{j=1}^r \delta_j(A)  (\nu_r(A)  - j) = [\nu_1(A) - 1] \nu_r (A) \, .$$

Accordingly, the explicit expression of the matrix $J_U$ is $$J_U = \begin{pmatrix} A_1^1 & 0 & \dots & \dots & \dots & 0 & 0 \\ 0 & \ddots   & \ddots & \dots & \dots & \vdots & 0  \\ \vdots & \ddots  & A_1^{\delta_1 (A)} & \ddots & \dots & \vdots & \vdots \\ \vdots & \dots & \ddots & \ddots & \ddots & \dots & \vdots \\ \vdots & \dots & \dots & \ddots & A_{r}^1 & \ddots & \vdots \\ 0 &  \dots & \dots & \dots &  \ddots & \ddots & 0 \\ 0 & 0 & \dots & \dots & \dots & 0 & A_{r}^{\delta_{r} (A)} \end{pmatrix} \in {\text{Mat}}_{\nu_r\times \nu_r} (k)\,  ,$$\noindent where $$A_j^s = \begin{pmatrix} 0 & 0 & \dots &\dots & \dots & 0 \\ 1 & 0 & \ddots &\dots & \dots & 0 \\ 0 & 1 & 0 & \ddots & \dots & \vdots \\ \vdots & \ddots &  \ddots \begin{matrix} {\text{\tiny {j-1)}}} \\ \quad \end{matrix} & \ddots & \ddots & \vdots \\0 &  0 & \dots & 1 & 0 & 0  \\ 0 &  0 & \dots & 0 & 1 & 0  \end{pmatrix} \in {\text{Mat}}_{j\times j} (k)$$\noindent for every $j\in \{1, \dots, r\}$ and $1 \leq s \leq \delta_j (A)$.

With the above notation, if $A\{GD\}$ is the set of the G-Drazin inverses of $A$, it follows from Lemma \ref{l:char-gen-inv-hom-n2} that there exists a bijection \begin{equation} \label{eq:bij-agd-k} \begin{aligned}\quad  k^{\nu_1 (A) \cdot \nu_r (A)} \times  k^{[\nu_r (A) - \nu_1 (A)]} \times k^{[\nu_1(A) - 1] [\nu_r (A) - \nu_1 (A)]} &\overset {\sim} \longrightarrow A\{GD\} \\\big ( ((\alpha_{j,h}^s), (\alpha_{j,j',z}^s)), (\lambda_{j,t}^s), (\lambda_{j,j',x}^s) \big ) &\longmapsto  (A^{GD})_{ ((\alpha_{j,h}^s), (\lambda_{j,t}^s))}^{((\alpha_{j,j',z}^s), (\lambda_{j,j',x}^s)} \, ,\end{aligned}\end{equation} \noindent where $j,j'\in \{1,\dots, r\}$; $j\ne j'$; $1\leq h\leq j$; $z\in \{1, \dots, j\}$; ${{t\in \{2,\dots,j\}}}$; $x\in\{2, \dots, j'\}$;  $s\in \{1,\dots, \delta_j (A)\}$ and $$(A^{GD})_{ ((\alpha_{j,h}^s), (\lambda_{j,t}^s))}^{((\alpha_{j,j',z}^s), (\lambda_{j,j',x}^s)} = P \cdot \begin{pmatrix} (A_W)^{-1} & 0 \\ 0 & (J_U^-)_{ ((\alpha_{j,h}^s), (\lambda_{j,t}^s))}^{((\alpha_{j,j',z}^s), (\lambda_{j,j',x}^s)}  \end{pmatrix} \cdot P^{-1}$$\noindent with $$(J_U^-)_{ ((\alpha_{j,h}^s), (\lambda_{j,t}^s))}^{((\alpha_{j,j',z}^s), (\lambda_{j,j',x}^s)} = \big ( \big ((J_U^-)_{ ((\alpha_{j,h}^s), (\lambda_{j,t}^s))}^{((\alpha_{j,j',z}^s), (\lambda_{j,j',x}^s)} \big )_{lm} \big )_{1\leq l,m \leq \nu_1} \in {\text{Mat}}_{\nu_r\times \nu_r} (k)$$ such that 

\begin{itemize}

\item if $j\in \{1, \dots, r\}$ and $1\leq s \leq \delta_j (A)$ are such that $l = (\sum_{i=1}^{j-1} \delta_i) + s$, then $(J_U^-)_{ll} \in {\text{Mat}}_{j\times j} (k)$ with $$\begin{aligned} \big ((J_U^-)_{ ((\alpha_{j,h}^s), (\lambda_{j,t}^s))}^{((\alpha_{j,j',z}^s), (\lambda_{j,j',x}^s)}\big )_{ll} &=  \big (A_{\{\alpha_{j,1}^{s}, \dots, \alpha_{j,j-1}^{s}\}}^{\{\lambda_{j,1}^{s}, \dots, \lambda_{j,j}^{s}\}}\big )^- = \\ &=
\begin{pmatrix} \alpha_{j,1}^{s} & 1 & 0 & \dots & \dots & 0  \\ \alpha_{j,2}^{s} &  0  & 1 & 0 & \ddots & \vdots \\ \vdots & \vdots & \ddots & \ddots & \ddots & \vdots \\ \alpha_{j,j-2}^{s} & 0 & \dots & \ddots & 1 & 0 \\  \alpha_{j,j-1}^{s} & 0 & \dots & \dots & 0 & 1 \\ \alpha_{j,j}^{s} & \lambda_{j,2}^s &  0 & \dots &  \lambda_{j,j-1}^s &  \lambda_{j,j}^s \end{pmatrix}\, ,\end{aligned}$$\noindent for all $\alpha_{j,h}^s, \lambda_{j,t}^s\in k$, $h\in \{1, \dots, j\}$, ${{t\in \{2,\dots,j\}}}$, $j\in \{1,\dots, r\}$ and $s\in \{1,\dots, \delta_j (A)\}$;

\item  if $j,j' \in \{1, \dots, r\}$, $1\leq s \leq \delta_j (A)$ and $1\leq s' \leq \delta_{j'} (A)$ are such that  $$l = (\sum_{i=1}^{j-1} \delta_i) + s  \text{ and } m = (\sum_{i=1}^{j'-1} \delta_i) + s'\, ,$$\noindent with $l\ne m$,  then $(J_U^-)_{lm} \in {\text{Mat}}_{j\times j'} (k)$ where $$\begin{aligned} \big ((J_U^-)_{ ((\alpha_{j,h}^s), (\lambda_{j,t}^s))}^{((\alpha_{j,j',z}^s), (\lambda_{j,j',x}^s)} \big)_{lm}  &= \big (A_{\{\alpha_{j,j',1}^{s}, \dots, \alpha_{j,j',j}^{s}\}}^{\{\lambda_{j,j',2}^{s}, \dots, \lambda_{j,j',j'}^{s}\}} \big )^- = \\ &= \begin{pmatrix} \alpha_{j,j',1}^s & 0 & \dots & \dots & 0 \\ \alpha_{j,j',2}^s & \vdots & \dots & \dots & \vdots \\ \vdots & \vdots & \dots & \dots & \vdots \\ \alpha_{j,j',j-1}^s &  0 & \dots & \dots & 0 \\ \alpha_{j,j',j}^s & \lambda_{j,j',2}^s & \dots & \dots & \lambda_{j,j',j'}^s \end{pmatrix} \end{aligned}$$\noindent for every $\alpha_{j,j',z}^s, \lambda_{j,j',x}^s \in k$, $j\ne j'$, $1\leq j,j' \leq r$, $z\in\{1, \dots, j\}$, \linebreak $x\in\{2, \dots, j'\}$ and $s\in \{1, \dots, \delta_j (A)\}$.

\end{itemize}  

If ${\tilde A} \in {\text{Mat}}_{n\times n} (k)$ with $i ({\tilde A}) = 1$, it follows from (\ref{eq:bij-agd-k}) that there exists a bijection ${\tilde A}\{GD\} \overset {\sim} \longrightarrow k^{[\nu_1 ({\tilde A})^2]}$.

From the results of this work, we can finally offer the following algorithm for computing the G-Drazin inverses of $A \in {\text{Mat}}_{n\times n} (k)$.

\begin{enumerate}

\item Fix a $k$-vector space $E$ with dimension n, a basis $B = \{e_1, \dots, e_n\}$ of $E$ and an endomorphism $\varphi \in \ed_k (E)$ associated with $A$ in the basis $B$, to facilitate the computations.

\item Compute the AST-decomposition $E = W_\varphi \oplus U_\varphi$ and the matrix expression (\ref{eq:AST-demc-matr}) for $A$.

\item Calculate the non-negative integer numbers $\{\nu_1 (A), \dots, \nu_r (A) \}$ and\linebreak  $\{\delta_1(A), \dots, \delta_r (A)\}$.

\item Construct the matrices $(J_U^-)_{ ((\alpha_{j,h}^s), (\lambda_{j,t}^s))}^{((\alpha_{j,j',z}^s), (\lambda_{j,j',x}^s)}$ and compute $(A_W)^{-1} $.

\item Get all the G-Drazin inverses $(A^{GD})_{ ((\alpha_{j,h}^s), (\lambda_{j,t}^s))}^{((\alpha_{j,j',z}^s), (\lambda_{j,j',x}^s)}$ of $A$.

\end{enumerate}

\begin{rem} We wish remark that is not necessary to compute the characteristic polynomial $c_A (x)$ in the method offered in this paper for calculate all the G-Drazin inverses of a square matrix $A$ with $i(A) = r$, because we can obtain the matrices $A_W$ y $A_U$ by computing $R(A^r)$ and $N(A^r)$, where $R(B)$ and $N(B)$ are the range and the nullspace of a matrix $B$ respectively.
\end{rem}

\begin{exam} \label{ex:g-dra-finmatrz} Let us consider an arbitrary field $k$ and the matrix $$A = \begin{pmatrix} -9 & -7 & 11 & -3 & -6 & -4 & -2 \\ -3 & 1 & 2 & 1 & 1 & 0 & 1 \\ -13 & -8 & 15 & -3 & -7 & -5 & -2 \\ -4 & -3 & 5 & -1 & -3 & -2 & -1 \\ -13 & -12 & 17 & -6 & -11 & -7 & -4 \\ 11 & 10 & -14 & 5 & 9 & 6 & 3 \\ 8 & 6 & -10 & 3 & 6 & 4 & 2 \end{pmatrix} \in {\text{Mat}}_{7\times 7} (k)\, .$$

We shall compute all the G-Drazin inverses $A^{GD}$ of $A$.

Let us now fix a $k$ vector space $E$ with basis $\{e_1, e_2, e_3, e_4, e_5, e_6, e_7\}$ and an endomorphism $\varphi \in \ed_k (E)$ such that $\varphi \equiv A$ in this basis.

It is easy to check that $i(A) = 3$, $\rk (A) = 5$, $\rk (A^2) = 3$ and $\rk (A^3) = 2$. Accordingly, $\nu_1 (A)  = 2$, $\nu_2(A) = 4$, $\nu_3 (A) = 5$, $\delta_1 (A) = 0$, $\delta_2 (A) = 1$ and $\delta_3 (A) = 1$.

Now, a non-difficult computation shows that $W_\varphi = \langle e_1 + e_2 + 2e_3 + e_5 - e_7, e_4 - e_6\rangle$, $U_\varphi = \langle e_1 + e_3, e_2 - e_5, -e_5 + e_6, e_4 - e_7, e_1 + e_3 - e_7 \rangle$, $$A_W = \begin{pmatrix} 2 & 1 \\ 1 & 1 \end{pmatrix}$$\noindent and $$A_U = \begin{pmatrix} 3 & -1 & 3 & -2 & 0 \\ -1 & 0 & -1 & 0 & 0 \\ -3 & 1 & -3 & 2 & 0 \\ 1 & 0 & 1 & 0 & 0 \\ -1 & 0 & -1 & 1 & 0 \end{pmatrix}\, .$$

Moreover, from the Jordan basis $$\begin{aligned} \{&-e_1 + e_2 - e_3 - e_5 - e_7, - e_1 - e_3 - e_5 + e_6, e_1 + e_2 + e_3 + e_4 - e_5 - e_7, \\ &-e_2 + e_4 + e_5 - e_7, - e_5 + e_6 + e_7\}\end{aligned}$$ of $U_\varphi$ induced by $\varphi_{\vert_{U_\varphi}}$, one gets that $$A = P \cdot \begin{pmatrix} 2 & 1 & 0 & 0 & 0 & 0 & 0 \\ 1 & 1 & 0 & 0 & 0 & 0 & 0 \\   0 & 0 & 0 & 0 & 0 & 0 & 0 \\ 0 & 0 & 1 &  0 & 0 & 0 & 0 \\  0 & 0 & 0 & 0 & 0 & 0 & 0 \\  0 & 0 & 0 & 0 & 1 & 0 & 0 \\ 0 & 0 & 0 & 0 & 0 & 1 & 0 \end{pmatrix} \cdot P^{-1}\, ,$$\noindent with $$P = \begin{pmatrix} 1 & 0 & -1 & -1 & 1 & 0 & 0 \\ 1 & 0 & 1 & 0 & 1 & -1 & 0 \\ 2 & 0 & -1 & -1 & 1 & 0 & 0 \\ 0 & 1 & 0 & 0  & 1 & 1 & 0 \\ 1 & 0 & -1 & -1 & -1 & 1 & -1 \\ 0 & -1 & 0 & 1 & 0 & 0 & 1 \\ -1 & 0 & -1 & 0 & -1 & -1 & 1\end{pmatrix}$$\noindent and $$P^{-1} = \begin{pmatrix} -1 & 0 & 1 & 0 & 0 & 0 & 0 \\ -2 & -1 & 2 & 0 & -1 & -1 & 0 \\ 5 & 6 & -7 & 3 & 5 & 3 & 2 \\ -8 & -8 & 10 & -4 & -7 & -4 & -3 \\ -1 & -2 & 2 & -1 & -2 & -1 & -1 \\ 3 & 3 & -4 & 2 & 3 & 2 & 1 \\ 6 & 7 & -8 & 4 & 6 & 4 & 3 \end{pmatrix}\, .$$

Thus, it follows from the method described above that a G-Drazin inverse of $A$ has the explicit expression $$(A^{GD})_{ ((\alpha_{j,h}^1), (\lambda_{j,t}^1))}^{(\gamma_{j,j',z}^1)} = P \cdot \begin{pmatrix} 1 & -1 & 0 & 0 & 0 & 0 & 0 \\ -1 & 2 & 0 & 0 & 0 & 0 & 0 \\   0 & 0 & \alpha_{2,1}^1 & 1 & \alpha_{2,3,1}^1 & 0 & 0 \\ 0 & 0 & \alpha_{2,2}^1 &  \lambda_{2,2}^1 &  \alpha_{2,3,2}^1 &  \lambda_{2,3,2}^1 &  \lambda_{2,3,3}^1 \\  0 & 0 &  \alpha_{3,2,1}^1 & 0 & \alpha_{3,1}^1 & 1 & 0 \\  0 & 0 &  \alpha_{3,2,2}^1 & 0 & \alpha_{3,2}^1 & 0 & 1 \\ 0 & 0 &  \alpha_{3,2,3}^1 & \lambda_{3,2,2}^1 & \alpha_{3,3}^1 & \lambda_{3,2}^1 & \lambda_{3,3}^1 \end{pmatrix} \cdot P^{-1}\, ,$$\noindent with $\alpha_{j,h}^1, \lambda_{j,t}^1, \alpha_{j,j',z}^1,  \lambda_{j,j',x}^1\in k$ for every $j\ne j'$, $2\leq j,j' \leq 3$, $h\in \{1, \dots, j\}$, ${{t\in \{2,\dots,j\}}}$, $z\in\{1, \dots, j\}$ and $x\in \{2, \dots, j'\}$.

Hence, with the data of this example, we have the following bijection that determines all the G-Drazin inverses of $A$: $$\begin{aligned}  k^{10} \times k^3 \times k^3  &\overset {\sim} \longrightarrow A\{GD\}  \\   \big ( ((\alpha_{j,h}^1), ( \alpha_{j,j',z}^1))_{\{j,j',h,z\}},  (\lambda_{j,t}^1)_{\{j,t\}}, ( \lambda_{j,j',x}^1)_{\{j,j',x\}} \big ) &\longmapsto (A^{GD})_{((\alpha_{j,h}^1), (\lambda_{j,t}^1))}^{(\gamma_{j,j',z}^1)}\, ,\end{aligned}$$\noindent with  $j\ne j'$, $2\leq j,j' \leq 3$, $h\in \{1, \dots, j\}$, ${{t\in \{2,\dots,j\}}}$, $z\in\{1, \dots, j\}$ and $x\in \{2, \dots, j'\}$.
\end{exam}

Finally, we shall study the relationships that there exist between G-Drazin inverses and the core-nilpotent decomposition of a matrix $A$.

\begin{rem} If $k$ is an arbitrary ground field, $A \in {\text{Mat}}_{n\times n} (k)$, $A = A_1 + A_2$ is its core-nilpotent decomposition and $A^{GD}$ is a G-Drazin inverse of $A$ with core-nilpotent decomposition $A^{GD} = (A^{GD})_1 + (A^{GD})_2$,  Example \ref{ex:g-dra-finmatrz} shows that, in \linebreak general, $(A^{GD})_1$ is not a G-Drazin inverse of $A_1$ and $(A^{GD})_2$ is not a G-Drazin inverse of $A_2$. 

	As a counterexample of this fact we offer the following: if $A$ is the matrix studied in Example \ref{ex:g-dra-finmatrz}, an easy computation shows that its core-nilpotent decomposition is $A = A_1 + A_2$ with $$A_1 = \begin{pmatrix} -4 & -1 & 4 & 0 & -1 & -1 & 0 \\   -4 & -1 & 4 & 0 & -1 & -1 & 0 \\ -8 & -2 & 8 & 0 & -2 & -2 & 0 \\ -3 & -1 & 3 & 0 & -1 & -1 & 0 \\ -4 & -1 & 4 & 0 & -1 & -1 & 0 \\ 3 & 1 & -3 & 0 & 1 & 1 & 0 \\ 4 & 1 & -4 & 0 & 1 & 1 & 0 \end{pmatrix}$$ and $$A_2 = \begin{pmatrix} -5 & -6 & 7  & -3 & -5 & -3 & -2 \\ 1 & 2 & -2 & 1 & 2 & 1 & 1 \\ -5 & -6 & 7  & -3 & -5 & -3 & -2 \\ -1 & -2 & 2 & -1 & -2 & -1 & -1 \\  -9 & -11 & 13 & -6 & -10 & -6 & -4 \\ 8 & 9 & -11 & 5 & 8 & 5 & 3 \\ 4 & 5 & -6 & 3 & 5 & 3 & 2 \end{pmatrix}\, .$$

If we now consider $$A^{GD} = \begin{pmatrix} 7 & 6 & -8 & 3 & 6 & 4 & 2 \\ -10 & -11  & 13 & -6 & -9 & -5 & -5 \\ 8 & 7 & -9 & 3 & 7 & 5 & 2 \\ 6 & 8 & -9 & 6 & 7 & 4 & 4 \\ 8 & 9 & -10 & 4 & 8 & 5 & 4 \\ 7 & 6 & -8 & 2 & 5 & 4 & 1 \\ -3 & -5 & 5 & -3 & -5 & -4 & 2  \end{pmatrix} \, ,$$\noindent which is the G-Drazin inverse of $A$ determined by $\lambda_{2,1}^1 = \lambda_{3,1}^1 = 1$ and  otherwise $\alpha_{j,h}^1 = \lambda_{j,t}^1 = \gamma_{j,j',z}^1 = 0$, one has that $(A^{GD})_1 = (A^{GD})^{-1}$ and $(A^{GD})_2 = 0$, and we can  immediately check that $(A^{GD})_1$ is not a G-Drazin inverse of $A_1$ and $(A^{GD})_2$ is not a G-Drazin inverse of $A_2$.

	Furthermore, if $(A_1)^{GD}$ is a G-Drazin inverse of $A_1$ and $(A_2)^{GD}$ is a G-Drazin inverse of $A_2$, in general, one has that ${\bar A}^{GD} = (A_1)^{GD}+ (A_2)^{GD}$ is not a G-Drazin inverse of $A$, as can be deduced from this counterexample: keeping again the data of Example \ref{ex:g-dra-finmatrz}, if we consider $$(A_1)^{GD} =  P \cdot \begin{pmatrix} 1 & -1 & 0 & 0 & 0 & 0 & 0 \\ -1 & 2 & 0 & 0 & 0 & 0 & 0 \\   0 & 0 & 0 & 0 & 0 & 1 & 0 \\ 0 & 0 & 0 &  0 & 0 & 0 & 0 \\  0 & 0 & 0 & 0 & 0 & 0 & 0 \\  0 & 0 & 0 & 0 & 0 & 0 & 0 \\ 0 & 0 & 0 & 0 & 0 & 0 & 0  \end{pmatrix} \cdot P^{-1}$$ \noindent and $$(A_2)^{GD} =  P \cdot \begin{pmatrix} 0 & 0 & 0 & 0 & 0 & 0 & 0 \\ 0 & 0 & 0 & 0 & 0 & 0 & 0 \\   0 & 0 & 0 & 1 & 0 & 0 & 0 \\ 0 & 0 & 0 &  0 & 0 & 0 & 0 \\  0 & 0 & 0 & 0 & 0 & 1 & 0 \\  0 & 0 & 0 & 0 & 0 & 0 & 1 \\ 0 & 0 & 0 & 0 & 0 & 0 & 0  \end{pmatrix} \cdot P^{-1}\, ,$$ then it is clear that $${\bar A}^{GD} =  P \cdot \begin{pmatrix} 1 & -1 & 0 & 0 & 0 & 0 & 0 \\ -1 & 2 & 0 & 0 & 0 & 0 & 0 \\   0 & 0 & 0 & 1 & 0 & 1 & 0 \\ 0 & 0 & 0 &  0 & 0 & 0 & 0 \\  0 & 0 & 0 & 0 & 0 & 1 & 0 \\  0 & 0 & 0 & 0 & 0 & 0 & 1 \\ 0 & 0 & 0 & 0 & 0 & 0 & 0  \end{pmatrix} \cdot P^{-1}$$\noindent is not a G-Drazin inverse of $A$.

\end{rem}

\end{document}